\theoremstyle{plain} 
\newtheorem{thm}{Theorem}[section]
\newtheorem{prop}[thm]{Proposition}
\newtheorem{lemma}[thm]{Lemma}
\newtheorem{cor}[thm]{Corollary} 
\newtheorem{conj}[thm]{Conjecture}
\theoremstyle{remark}
\newtheorem{remark}[thm]{Remark}
\theoremstyle{definition}
\newtheorem{defin}[thm]{Definition}
\newcommand{\CC}{\mathbb{C}}
\newcommand{\QQ}{\mathbb{Q}}
\newcommand{\ZZ}{\mathbb{Z}}
\DeclareMathOperator{\sgn}{sgn}
\DeclareMathOperator{\Gal}{Gal}
\DeclareMathOperator{\Aut}{Aut}
\newcommand{\la}{\langle}
\newcommand{\ra}{\rangle}
\begin{document}
	
	\title{Iterates of post-critically finite polynomials of the form $\boldsymbol{x^d+c}$}
	\date{}
	\subjclass[2010]{37P15, 11R09, 37P20}
	\author[Goksel]{Vefa Goksel}
	\address{Towson University \\ Towson, MD 21252 \\ USA}
	\email{vgoksel@towson.edu}
	
	\begin{abstract}
		Fix a prime number $d$. The post-critically finite polynomials of the form $f_{d,c} = x^d+c\in \CC[x]$ play a fundamental role in polynomial dynamics. While many results are known in the complex dynamical setting, much less is understood about the arithmetic properties of these polynomials. In this paper, we describe the factorization of the iterates of post-critically finite polynomials $f_{d,c}$ over their fields of definition. As a consequence, we prove new cases of a conjecture of Andrews and Petsche on abelian arboreal Galois representations.
	\end{abstract}
	
	\maketitle

	\section{Introduction}
	Let $f \in \mathbb{C}[x]$ be a polynomial of degree at least $2$. We denote by $f^n$ the \emph{$n$-th iterate of $f$}, defined inductively by $f^0 = x$ and $f^n = f \circ f^{n-1}$ for $n \geq 1$.

	
	We consider the family of polynomials $f_{d,c} = x^d + c \in \mathbb{C}[x]$, where $d$ is any prime. Each polynomial in this family has $0$ as its unique (finite) critical point. Therefore, the \emph{post-critical orbit} of $f_{d,c}$ is given by the forward orbit
	\[
	\{c,\, c^d + c,\, (c^d + c)^d + c,\, \dots\}
	\]
	of $0$ under iteration by $f_{d,c}$. For simplicity, we define $a_1 = c$ and $a_{i+1} = a_i^d + c$ for $i \geq 1$. The parameters $c_0$ for which $f_{d,c_0}^n(0) = 0$ and $f_{d,c_0}^i(0) \neq 0$ for all $0 < i < n$ are called \emph{Gleason parameters of period $n$}, and they are the roots of the monic polynomial $G_{d,0,n} \in \mathbb{Z}[c]$ defined by
	\begin{equation}
		\label{eq:Gleasondef}
		G_{d,0,n}(c) := \prod_{k \mid n} (a_k)^{\mu(n/k)} \in \mathbb{Z}[c].
	\end{equation}
	
	Suppose $c_0\in \CC$ is such that $0$ is strictly preperiodic for $f_{d,c_0}$. Then $f_{d,c_0}^{m+n}(0) = f_{d,c_0}^m(0)$ for some minimal integers $m\geq 2, n\geq 1$. It follows that $\zeta f_{d,c_0}^{m-1}(0) = f_{d,c_0}^{m+n-1}(0)$ for some $d$-th root of unity $\zeta \ne 1$. These parameters are called the \emph{Misiurewicz parameters of type $(m,n)$}, and they are the roots of the monic polynomial $G_{d,m,n}^{\zeta}(c) \in \ZZ[\zeta][c]$ defined by
	\begin{equation}
		\label{eq:Gdef}
		G_{d,m,n}^{\zeta}(c) := \prod_{k \mid n} \big( a_{m+k-1} - \zeta a_{m-1} \big)^{\mu(n/k)}
		\cdot \begin{cases}
			\prod_{k \mid n} \big( a_k \big)^{-\mu(n/k)} & \text{if } n \mid (m-1), \\
			1 & \text{if } n \nmid (m-1).
		\end{cases}
	\end{equation}
	
	Misiurewicz and Gleason parameters have been extensively studied in complex dynamics, especially in the quadratic case; see \cite{DH,Eber99,FG15,HubSch94,Poirier93,Milnor20,Romera1996,Schleicher94} for a limited list of examples. In contrast, there is very little known about the arithmetic properties of these parameters. For instance, recently, the irreducibility of the polynomials $G_{d,m,n}^{\zeta}$ over $\QQ(\zeta)$ has been established in some special cases \cite{BEK19,Gok19,Gok20}. It has been conjectured that these polynomials are always irreducible over $\QQ(\zeta)$; see, for example, \cite[Remark~3.5]{Milnor12}, \cite[Exercise~4.17]{Sil07}, and \cite[Conjecture 1.1]{BG1}. See also \cite{BG1,BG2,BG3} for partial results on a related arithmetic question concerning the number fields generated by Misiurewicz parameters.
	
	 \par
	
	A polynomial $f \in \CC[x]$ is called \emph{post-critically finite (PCF)} if the orbits of all its critical points under $f$ are finite. In particular, $f_{d,c_0}$ is PCF when $c_0$ is a Misiurewicz or Gleason parameter. Moreover, in the case $d = 2$, any PCF quadratic polynomial is linearly conjugate to $f_{2,c_0}$ for a unique Misiurewicz or Gleason parameter $c_0$.\par
	In this paper, we study the factorization of the iterated polynomials $f_{2,c_0}^n - \alpha$ over $\mathbb{Q}(c_0)$, where $c_0$ is a Misiurewicz or Gleason parameter and $\alpha \in \mathbb{Q}(c_0)$. We give an explicit description of the factorization of these polynomials under a local condition on $\alpha$ (see Theorem~\ref{thm:preperiodic} and Theorem~\ref{thm:periodic}).
	
		\begin{defin}
		\label{def:stability}
		Let $K$ be a field, let $\alpha \in K$, and let $f \in K[x]$ be a polynomial of degree at least $2$.
		We say that the pair $(f,\alpha)$ is \emph{\textbf{stable over $K$}} if $f^n - \alpha$ is irreducible over $K$ for all $n \geq 1$. We say that the pair $(f,\alpha)$ is \emph{\textbf{eventually stable over $K$}} if the number of irreducible factors of $f^n - \alpha$ over $K$ remains bounded as $n \to \infty$.
	\end{defin}
		Beyond their intrinsic interest, stability and eventual stability also play a key role in the Galois theory of iterated polynomials. For instance, recent results in \cite{BDGHT21, Bridy19} show that, under mild assumptions on $f$, eventual stability can, in certain cases, guarantee that the inverse limit of these Galois groups has finite index in an appropriate profinite overgroup. See below for further discussion of these Galois groups.\par

		Before stating our main results, we introduce a new family of polynomials defined over $K = \QQ(c_0)$ for a given Gleason parameter $c_0$. We will use these polynomials to describe the factorization of iterates of $f_{d,c_0}$ when $c_0$ is a Gleason parameter.
	\begin{defin}
		\label{def:F_{k,i}}
		Let $d$ be a prime. Let $c_0$ be a root of $G_{d,n}$ for some $n\geq 2$. Let $\zeta \neq 1$ be a $d$-th root of unity. Set $K = \QQ(c_0)$. For $k \geq 0$ and $1 \leq i \leq n - 1$, define the polynomial $F_{k,i}^{(d,c_0)}$ by
		\[
		F_{k,i}^{(d,c_0)} = \prod_{j=1}^{d-1} \left(f_{d,c_0}^k - \zeta^j a_i(c_0)\right).
		\]
	\end{defin}
	
	We are ready to state our main results.
	
	\begin{thm}
		\label{thm:preperiodic}
		Let $d$ be a prime. Let $\zeta\neq 1$ be a $d$-th root of unity, and let $c_0$ be a root of $G_{d,m,n}^{\zeta}$ for some $m \geq 2$, $n \geq 1$. Set $K = \QQ(c_0)$, and suppose that $\alpha \in K$ satisfies $v_{\mathfrak{p}}(\alpha) \geq 2$ for some prime $\mathfrak{p} \subset \mathcal{O}_K$ above $d$. Then the pair $(f_{d,c_0}, \alpha)$ is stable over $K$.
	\end{thm}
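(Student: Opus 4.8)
The plan is to reduce stability to a single statement about the post-critical orbit of $f_{d,c_0}$ modulo a prime above $d$, by combining the standard stability induction with a telescoping norm identity.

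First I would pass to the completion $L = K_{\mathfrak{p}}$: it suffices to prove that $f_{d,c_0}^n - \alpha$ is irreducible over $L$ for every $n$, since irreducibility over the completion forces irreducibility over $K$. Fix a compatible system of roots $\theta_n$ with $\theta_0 = \alpha$ and $\theta_n^d = \theta_{n-1} - c_0$, and set $L_n = L(\theta_n)$. Because $d$ is prime (so $4 \nmid d$), the usual composition criterion applied to $f_{d,c_0}^{n+1} - \alpha = (f_{d,c_0}^n - \alpha)(x^d + c_0)$ shows that $f_{d,c_0}^{n+1} - \alpha$ is irreducible over $L$ once $f_{d,c_0}^n - \alpha$ is, precisely when $\theta_n - c_0 \notin (L_n^{\times})^d$. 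Thus stability over $L$ is equivalent to the condition $\theta_n - c_0 \notin (L_n^{\times})^d$ for all $n \geq 0$.

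The key step is a telescoping identity for the relative norms. Using $\theta_n^d = \theta_{n-1} - a_1$, the orbit recursion $a_{i+1} = a_i^d + a_1$, and the fact that the minimal polynomial of $\theta_n$ over $L_{n-1}$ is $x^d - (\theta_{n-1} - a_1)$, I would compute
\[
\Norm_{L_n/L_{n-1}}(\theta_n - a_i) = (-1)^{d+1}\big(\theta_{n-1} - a_{i+1}\big),
\]
and then iterate down the tower to obtain $\Norm_{L_n/L}(\theta_n - a_1) = \pm(\alpha - a_{n+1})$. Since the norm of a $d$-th power is a $d$-th power (and $-1 \in (L^{\times})^d$ for odd $d$, while for $d=2$ the stray sign is harmless for the valuation and residue tests used below), it now suffices to prove the following \emph{key lemma}: $\alpha - a_k \notin (L^{\times})^d$ for every $k \geq 1$. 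Pleasantly, this formulation also absorbs the base case $n=0$, namely $\alpha - c_0 \notin (L^{\times})^d$.

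To attack the key lemma I would reduce modulo $\mathfrak{p}^2$: since $v_{\mathfrak{p}}(\alpha) \geq 2$ we have $\alpha - a_k \equiv -a_k \pmod{\mathfrak{p}^2}$, so it is enough to show $-a_k$ is not a $d$-th power in $\calO_L/\mathfrak{p}^2$; this is exactly where the hypothesis $v_{\mathfrak{p}}(\alpha) \geq 2$ enters. In $(\calO_L/\mathfrak{p}^2)^{\times}$ the $d$-th powers are precisely the Teichm\"uller lifts of the residue field, because every principal unit $1 + x$ with $x \in \mathfrak{p}$ satisfies $(1+x)^d \equiv 1 \pmod{\mathfrak{p}^2}$ (all of $\binom{d}{j}x^j$ and $x^d$ lie in $\mathfrak{p}^2$, using $\mathfrak p \mid d$). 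Hence $-a_k$ fails to be a $d$-th power whenever either $v_{\mathfrak{p}}(a_k) = 1$ (a clean valuation obstruction, as no $d$-th power has valuation $1$) or $a_k$ is a unit whose principal part is nontrivial modulo $\mathfrak{p}^2$.

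The remaining and hardest task is to control the $\mathfrak{p}$-adic behaviour of the post-critical orbit $\{a_k\}$, and I expect this to be the main obstacle. Here I would use two structural inputs. First, the characteristic-$d$ ``freshman's dream'' gives the additive reduction $a_k \equiv \sum_{j=0}^{k-1} c_0^{d^j} \pmod{\mathfrak{p}}$, which determines exactly when $a_k$ is a unit. Second, since $c_0$ is a root of $G_{d,m,n}^{\zeta}$ the orbit is preperiodic and obeys the Misiurewicz relation $a_{m+n-1} = \zeta a_{m-1}$ with $\bar\zeta = 1$ in the residue field, which pins down the finitely many cycle values. Combining these, the goal is to force $v_{\mathfrak{p}}(a_k) \in \{0,1\}$ for all $k$ (ruling out the undetectable case $v_{\mathfrak{p}}(a_k) \geq 2$, which would collapse $-a_k$ to $0$ modulo $\mathfrak{p}^2$) and to verify that unit orbit values have nontrivial principal part modulo $\mathfrak{p}^2$. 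Establishing these two orbit properties is the part that genuinely requires the Misiurewicz structure rather than formal manipulation, and it is where I anticipate the real work will lie.
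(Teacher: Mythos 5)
There is a genuine gap, and it sits exactly where you predicted: the unit case of your key lemma. Your tower induction propagates irreducibility one level at a time, so it needs $\alpha - a_{k}(c_0) \notin \pm(L^{\times})^d$ for \emph{every} $k \geq 1$. For $n \mid k$ the clean valuation obstruction you describe does work, and the input it needs is precisely what the paper cites: by Theorem~\ref{thm:power-relation}, $a_k(c_0)$ is a unit when $n \nmid k$, while $\langle a_k(c_0)\rangle^{A} = \langle d\rangle$ when $n \mid k$, which combined with the squarefreeness of $a_n(c_0)$ from \cite[Corollary 3.5]{Gok20} gives $v_{\mathfrak{p}}(a_k(c_0)) = 1$, hence $v_{\mathfrak{p}}(\alpha - a_k(c_0)) = 1 \not\equiv 0 \pmod d$. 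But for $n \nmid k$ your test demands that the unit $-a_k(c_0)$ avoid the Teichm\"uller classes modulo $\mathfrak{p}^2$ (your description of the $d$-th powers in $(\mathcal{O}_L/\mathfrak{p}^2)^{\times}$ is correct), and nothing in the Misiurewicz structure is known to force this: neither the paper nor the results it invokes control orbit values modulo $\mathfrak{p}^2$ beyond their valuations, and your proposal offers no argument. Worse, since your norm criterion is only a necessary condition for $\theta_k - c_0$ to be a $d$-th power, a failure of the unit claim at a single $k$ would not contradict the theorem — it would merely leave your induction stuck at that level with no way to restart, because the tower cannot skip levels.

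The paper's proof avoids the unit case entirely via an elementary observation your level-by-level scheme cannot exploit: since $f^{i} - \alpha = (f^{k} - \alpha)\circ f^{\,i-k}$, any nontrivial factorization of $f^{k}-\alpha$ composes to one of $f^{i}-\alpha$, so irreducibility at infinitely many levels forces it at all levels (Lemma~\ref{lemma:irreducible-infinite-iterate}). It then proves irreducibility only at the levels $k = ni$, directly and in one stroke: by Lemma~\ref{lem:expansion_preperiodic}, $f_{d,c_0}^{ni} = x^{d^{ni}} + d\,x^{d}F(x) + u\,a_n(c_0)$ with $u$ a unit, so $f_{d,c_0}^{ni} - \alpha$ has middle coefficients divisible by $d$ and constant term of $\mathfrak{p}$-valuation exactly $1$ (using $v_{\mathfrak{p}}(\alpha)\geq 2$), i.e., it is Eisenstein at $\mathfrak{p}$. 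Your instinct to work in the completion is sound — Eisenstein is a local (Newton-polygon) statement — but the correct repair of your write-up is not to push the mod-$\mathfrak{p}^2$ analysis at the levels $n \nmid k$; it is to discard the step-by-step Capelli tower in favor of the skip-levels composition trick together with the Eisenstein criterion at multiples of $n$.
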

	
	\begin{remark}
		\label{remark:generalization}
		Theorem~\ref{thm:preperiodic} generalizes \cite[Corollary 1.2]{Gok19}, which shows stability of $f_{d,c_0}$ only in the case $n=1$ and $\alpha=0$.
	\end{remark}
	
	\begin{thm}
		\label{thm:periodic}
		Let $d$ be a prime. Let $c_0$ be a root of $G_{d,n}$ for some $n \geq 1$. Set $K = \QQ(c_0)$.
		\begin{enumerate}[label=(\alph*)]
			\item If $\alpha \in K$ satisfies $v_{\mathfrak{p}}(\alpha) = 1$ for some prime $\mathfrak{p} \subset \mathcal{O}_K$ above $d$, then the pair $(f_{d,c_0}, \alpha)$ is stable over $K$.
			\item Suppose further that $n\geq 2$. For any $k \geq 1$, write $k = nq + r$ for some $q \geq 0$ and $0 \leq r < n$. Then $f_{d,c_0}^k$ factors as
			\[
			f_{d,c_0}^k = \left(\prod_{j=0}^{q-1} \prod_{i=1}^{n-1} \left(F_{k - nj - i, n - i}^{(d,c_0)}\right)^{d^j}\right) \left((x - a_{n - r}(c_0)) \prod_{i=1}^r F_{r - i, n - i}^{(d,c_0)}\right)^{d^q}
			\]
			over $K$. Moreover, the polynomials of the form $F_{a,b}^{(d,c_0)}$ that appear in the above product are distinct and irreducible over $K$.
		\end{enumerate}
	\end{thm}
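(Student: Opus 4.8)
The common foundation for both parts is understanding $f := f_{d,c_0}$ modulo a prime $\mathfrak{p}\mid d$ of $\mathcal{O}_K$. Since $c_0$ is an algebraic integer, each $a_i\in\mathcal{O}_K$, and because the residue field has characteristic $d$ the Frobenius relation gives, by an immediate induction, $\overline{f^N}(x)=x^{d^N}+\overline{a_N}$ for every $N\ge 1$; in particular $a_n=0$ forces $\overline{f^N}(x)=x^{d^N}$ whenever $n\mid N$. Two further arithmetic facts about Gleason parameters are needed, and I expect these to be the main obstacle: (i) $v_{\mathfrak{p}}(a_b)=0$ for $1\le b\le n-1$ (equivalently, the reduced critical cycle still has exact period $n$), and (ii) $d$ is unramified in $K$, so that $K\cap\QQ(\zeta)=\QQ$ and $[K(\zeta):K]=d-1$. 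Both should follow from the separability of $G_{d,0,n}$ modulo $\mathfrak{p}$ together with the divisibility relations among the $a_i$.

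Part (a) runs on Eisenstein's criterion. When $n\mid N$, the congruence $\overline{f^N}(x)=x^{d^N}$ and $v_{\mathfrak{p}}(\alpha)=1$ make $f^N-\alpha$ Eisenstein at $\mathfrak{p}$ (its constant term is $-\alpha$), hence irreducible and totally ramified of degree $d^N$. For general $N=qn+r$ with $0<r<n$, I take a root $\theta$ and set $\omega:=f^r(\theta)$, a root of the Eisenstein polynomial $f^{qn}-\alpha$; then $E:=K(\omega)$ is totally ramified over $K$ of degree $d^{qn}$ and $\omega$ is a uniformizer. The point is to show $f^r(x)-\omega$ is irreducible over $E$, and here I use the exact cycle identity $f^r(a_{n-r})=a_n=0$: the substitution $x=y+a_{n-r}$ yields $g(y):=f^r(y+a_{n-r})-\omega$ with $\overline{g}(y)=y^{d^r}$ and $g(0)=-\omega$ of valuation exactly $1$, so $g$ is Eisenstein over $E$. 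Therefore $[K(\theta):K]=d^r\cdot d^{qn}=d^N$, and the same shifted computation over $K$ itself covers the case $q=0$; thus $(f_{d,c_0},\alpha)$ is stable.

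For part (b) I first prove the displayed identity and then the irreducibility. The identity telescopes from the single relation $F_{k,i}^{(d,c_0)}=(f^{k+1}-a_{i+1})/(f^{k}-a_i)$, itself immediate from $\prod_{j=0}^{d-1}(f^{k}-\zeta^j a_i)=(f^{k})^d-a_i^d=f^{k+1}-a_{i+1}$. Successively peeling off $F_{k-i,\,n-i}^{(d,c_0)}$ for $i=1,\dots,n-1$ collapses $f^{k}=f^{k}-a_n$ to $f^{k-n+1}-a_1=(f^{k-n})^d$, giving $f^{k}=\big(\prod_{i=1}^{n-1}F_{k-i,\,n-i}^{(d,c_0)}\big)(f^{k-n})^d$; iterating $q$ times and telescoping the residual $f^{r}$ down to $x-a_{n-r}$ produces the stated product. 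Distinctness is bookkeeping: the degree $(d-1)d^a$ determines $a$, and the denominator $f^{a}-a_b$ (with $a_1,\dots,a_{n-1}$ pairwise distinct) determines $b$, after which one checks that the index pairs occurring are pairwise distinct and that none coincides with the linear factor.

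Irreducibility of each $F_{a,b}^{(d,c_0)}$ over $K$ is where input (ii) enters. Over $L:=K(\zeta)$ I show each factor $f^{a}-\zeta a_b$ is irreducible of degree $d^a$: choosing $\mathfrak{q}\mid\mathfrak{p}$ in $L$, unramifiedness of $d$ in $K$ makes $1-\zeta$ a uniformizer at $\mathfrak{q}$, and with $s\equiv b-a\pmod n$ the exact identity $f^{a}(a_s)=a_b$ shows that $x=y+a_s$ turns $f^{a}(x)-\zeta a_b$ into a polynomial with reduction $y^{d^a}$ and constant term $(1-\zeta)a_b$, of valuation exactly $1$ by (i); this is Eisenstein at $\mathfrak{q}$. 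Finally, for a root $\theta$ one has $\zeta=f^{a}(\theta)/a_b\in K(\theta)$, so $L\subseteq K(\theta)$ and $[K(\theta):K]=d^a(d-1)=\deg F_{a,b}^{(d,c_0)}$; as $F_{a,b}^{(d,c_0)}$ is monic in $K[x]$ with $\theta$ as a root, it is the minimal polynomial of $\theta$, hence irreducible. Granting the two valuation/ramification inputs of the first paragraph, both parts are then driven by the same device: an Eisenstein polynomial produced by a shift that invokes an exact identity within the critical cycle.
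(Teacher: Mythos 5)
Your proposal is correct, and it departs from the paper's proof at two genuine junctures. For part (a), the paper only makes the levels $ni$ Eisenstein (via Lemma~\ref{lem:expansion_periodic}, whose content is your Frobenius identity $\overline{f^N}=x^{d^N}+\overline{a_N}$) and then disposes of all remaining levels with a soft trick (Lemma~\ref{lemma:irreducible-infinite-iterate}): since $f^i-\alpha=(f^k-\alpha)\circ f^{i-k}$, irreducibility at infinitely many levels forces it at every level. You instead prove each level $N=qn+r$ directly: $\omega=f^r(\theta)$ is a root of the Eisenstein polynomial $f^{qn}-\alpha$, hence a uniformizer of the totally ramified $E=K(\omega)$, and the shift $x=y+a_{n-r}$ (using $f^r(a_{n-r})=a_n=0$) makes $f^r-\omega$ Eisenstein over $E$; this is longer than the paper's one-line lemma but buys more, namely that $\mathfrak{p}$ is totally ramified in each $K(\theta)$. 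For part (b), your identity $f^{k+1}-a_{i+1}=(f^k-a_i)F_{k,i}^{(d,c_0)}$, derived algebraically from $\prod_{j=0}^{d-1}(X-\zeta^j a_i)=X^d-a_i^d$, is exactly the paper's Eq.~\eqref{eq:f_{d,c}^k-a_j} (obtained there via preimage sets), and your telescoping reproduces the induction of Proposition~\ref{prop:factorization-f_{d,c}^k}. The substantive difference is the irreducibility of $F_{a,b}^{(d,c_0)}$: the paper composes $f^a-\zeta^{\ell}a_b$ on the right with $f^m$ (where $m+a\equiv b \bmod n$) to produce an Eisenstein polynomial and then descends from $L=K(\zeta)$ to $K$ via its Capelli corollary (Corollary~\ref{cor:capelli}), whereas you translate the variable by $a_s$, $s\equiv b-a \bmod n$, making $f^a-\zeta^{\ell}a_b$ itself Eisenstein at $\mathfrak{q}$, and descend by the degree count $\zeta^{\ell}=f^a(\theta)/a_b\in K(\theta)$, $[K(\theta):K]=d^a(d-1)=\deg F_{a,b}^{(d,c_0)}$ --- a clean substitute for Capelli resting on the same input $[K(\zeta):K]=d-1$, i.e.\ the paper's Lemma~\ref{lem:trivial_cyclo}. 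Two compressed spots deserve mention but are fillable. First, your deferred inputs (i) and (ii) are precisely what the paper imports from the literature: (i) is \cite[Lemma 3.1]{Gok19} (and does follow, as you expect, from $\tfrac{d}{dc}a_N\equiv 1 \pmod d$, which gives separability of $a_N$ mod $d$), and (ii) is Lemma~\ref{lem:trivial_cyclo} via Buff's separability result. Second, in the distinctness step, ``the denominator determines $b$'' needs one more line: writing $P_b(X)=\prod_{j=1}^{d-1}(X-\zeta^j a_b)$, if $P_b=P_{b'}$ then subtracting $(X-a_b)P_b=X^d-a_b^d$ from $(X-a_{b'})P_b=X^d-a_{b'}^d$ shows $(a_b-a_{b'})P_b$ is constant, forcing $a_b=a_{b'}$ and hence $b=b'$ by exact periodicity; note the paper proves the stronger pairwise coprimality (Lemma~\ref{lemma:gcd-F_{k,i}}) by a forward-orbit argument, but mere distinctness is all the theorem asserts, so your weaker route suffices.
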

	
	It is a well-known fact that the pair \(( f_{d,c_0},0) \) is not eventually stable over \( K = \QQ(c_0) \) when \( c_0 \) is a Gleason parameter. Theorem~\ref{thm:periodic} provides a quantitative version of this fact:
	
	\begin{cor}
		\label{cor:quantitative}
		Let \( d \) be a prime, and let \( c_0 \) be a root of \( G_{d,n} \) for some \( n \geq 2\). Set \( K = \QQ(c_0) \). Then, for any \( k \geq 1 \), the polynomial \( f_{d,c_0}^k \) has \( k - \lfloor k/n \rfloor + 1 \) distinct irreducible factors over \( K \).
	\end{cor}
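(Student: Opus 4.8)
The plan is to read the answer directly off the explicit factorization supplied by Theorem~\ref{thm:periodic}(b) and then simply tally the distinct irreducible factors. Write $k = nq + r$ with $q = \lfloor k/n\rfloor$ and $0 \le r < n$, exactly as in that theorem. Since Theorem~\ref{thm:periodic}(b) already asserts that every polynomial $F_{a,b}^{(d,c_0)}$ occurring in the product is irreducible over $K$ and that these are pairwise distinct, and since the extra factor $x - a_{n-r}(c_0)$ is manifestly monic linear (hence irreducible over $K$, as $a_{n-r}(c_0)\in K$), the number of distinct irreducible factors of $f_{d,c_0}^k$ is just the number of distinct polynomials displayed in the factorization; the multiplicities $d^j$ and $d^q$ are irrelevant to this count.

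Next I would carry out the count itself. The first (double) product $\prod_{j=0}^{q-1}\prod_{i=1}^{n-1}(F_{k-nj-i,\,n-i}^{(d,c_0)})^{d^j}$ contributes one factor for each of the $q$ values $j \in \{0,\dots,q-1\}$ and $n-1$ values $i \in \{1,\dots,n-1\}$, i.e.\ $q(n-1)$ distinct irreducible factors. The second product $\big((x-a_{n-r}(c_0))\prod_{i=1}^{r}F_{r-i,\,n-i}^{(d,c_0)}\big)^{d^q}$ contributes the single linear factor together with one factor for each $i \in \{1,\dots,r\}$, i.e.\ $r+1$ factors. Adding these and using $k = nq+r$ gives
\[
q(n-1) + (r+1) = qn - q + r + 1 = k - q + 1 = k - \lfloor k/n\rfloor + 1,
\]
which is the claimed count.

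The one point requiring genuine verification beyond the bookkeeping is that the linear factor $x - a_{n-r}(c_0)$ is distinct from every $F$-factor, since Theorem~\ref{thm:periodic}(b) only guarantees distinctness among the $F_{a,b}^{(d,c_0)}$. This is where I expect whatever minor difficulty there is to lie, and it is resolved quickly: each $F_{a,b}^{(d,c_0)}$ has degree $(d-1)d^{a}$, so it can equal a linear polynomial only when $d = 2$ and $a = 0$. For $d \ge 3$ the degrees already force distinctness. For $d = 2$, the only linear $F$-factor in the product is $F_{0,n-r}^{(2,c_0)} = x + a_{n-r}(c_0)$ (arising from the term $i = r$ in the second product, and only when $r \ge 1$), and it coincides with $x - a_{n-r}(c_0)$ only if $a_{n-r}(c_0) = 0$; but for $1 \le r \le n-1$ we have $1 \le n-r \le n-1$, so $a_{n-r}(c_0) \ne 0$ by the minimality of the period $n$ of the Gleason parameter $c_0$. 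When $r = 0$ there is no linear $F$-factor at all, so no coincidence can occur. This confirms that all displayed factors are pairwise distinct, completing the proof.
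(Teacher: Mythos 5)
Your proof is correct and takes essentially the same route as the paper's, which likewise reads the count $q(n-1)+(r+1)=k-q+1$ with $q=\lfloor k/n\rfloor$ directly off the factorization in Theorem~\ref{thm:periodic}(b). Your extra verification that the linear factor $x-a_{n-r}(c_0)$ cannot coincide with any $F_{a,b}^{(d,c_0)}$ (possible only for $d=2$, $a=0$, and ruled out since $a_{n-r}(c_0)\neq 0$ by the exact period $n$ of the Gleason parameter) is a detail the paper's ``direct computation'' leaves implicit, and you resolve it correctly.
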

	
	\begin{proof}
		By direct computation using part (b) of Theorem~\ref{thm:periodic}, \( f_{d,c_0}^k \) has
		\[
		nq - q + r + 1 = k - q + 1
		\]
		distinct irreducible factors. Dividing both sides of the identity \( k = nq + r \) by \( n \), we obtain
		\[
		\frac{k}{n} = q + \frac{r}{n}.
		\]
		Since \( 0 \leq r \leq n - 1 \) by definition, this implies \( q = \lfloor k/n \rfloor \), which yields the desired result.
	\end{proof}
	
	\begin{remark}
		For $n\geq 2$, the expression \( k - \lfloor k/n \rfloor + 1 \) is clearly unbounded as \( k \to \infty \). If \( n = 1 \), then by the definition of \( G_{d,1} \), we have \( c_0 = 0 \), i.e., \( f_{d,0} = x^d \), so the pair $(f_{d,0},0)$ is clearly not eventually stable.
	\end{remark}
	
	Theorem~\ref{thm:preperiodic} and Theorem~\ref{thm:periodic} also have applications to \emph{arboreal Galois representations}, which we now describe. Let $K$ be a number field, and let $\alpha \in K$. Let $f \in K[x]$ be a polynomial of degree $d\geq 2$. One can construct an infinite rooted $d$-ary tree using iterates of the polynomial $f$ as follows. We place $\alpha$ at the root of the tree, and for any $n \geq 1$, we place the solutions of $f^n(x) = \alpha$ at the $n$-th level of the tree. Moreover, an edge is drawn from an element $\beta$ at the $n$-th level to an element $\theta$ at the $(n+1)$-th level if and only if $f(\theta) = \beta$. If one further assumes that $f^n - \alpha$ is separable for all $n \geq 1$, this construction gives a complete $d$-ary rooted tree. This tree is called a \emph{pre-image tree} of $f$, and is denoted by $T_{\infty}^d$. The absolute Galois group $\Gal(\overline{K}/K)$ has a natural action on $T_{\infty}^d$, which yields a homomorphism
	\[
	\rho: \Gal(\overline{K}/K) \rightarrow \Aut(T_{\infty}^d).
	\]
	
	The map $\rho$ is called an \emph{arboreal Galois representation}, and its image is denoted by $G_{\infty}(f,\alpha)$. Letting $G_n(f,\alpha)$ be the Galois group of $f^n - \alpha$ over $K$, this image can also be described as
	\[
	G_{\infty}(f,\alpha) = \varprojlim G_n(f,\alpha).
	\]
	The question of whether this image has finite index in $\Aut(T_{\infty}^d)$ is a major open problem in arithmetic dynamics. There are many parallels between arboreal Galois representations and the classical $\ell$-adic representations arising from elliptic curves, which has motivated much of the work in this area over the last two decades. See \cite[Section 5]{BIJMST19} and \cite{Jones14} for some general overviews of the subject.\par
	
	Based on existing results, it is generally expected that the group $G_{\infty}(f,\alpha)$ has finite index in $\Aut(T_{\infty}^d)$ unless $f$ belongs to some special family of polynomials. See \cite[Conjecture 3.11]{Jones14} for a precise conjecture in the quadratic case. In particular, it seems reasonable to expect that the group $G_{\infty}(f,\alpha)$ is very rarely abelian. In 2020, Andrews and Petsche \cite{AndrewsPetsche20} proposed the following conjectural description of all abelian arboreal Galois representations.
	
	\begin{conj}
		\label{conj:abelian}
		Let $K$ be a number field, $f$ a polynomial of degree $d\geq 2$, and $\alpha\in K$ be a non-exceptional point for $f$. Then $G_{\infty}(f,\alpha)$ is abelian if and only if there exists a root of unity $\zeta$ such that the pair $(f,\alpha)$ is $K^{\text{ab}}$-conjugated to either $(x^d,\zeta)$ or $(\pm T_d(x), \zeta+\zeta^{-1})$.
	\end{conj}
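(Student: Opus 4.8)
The statement is a biconditional whose two directions are of very different character, so the plan is to prove them separately. Throughout, write $K_n := K(f^{-n}(\alpha))$ for the splitting field of $f^n-\alpha$ over $K$, so that $G_n(f,\alpha) = \Gal(K_n/K)$ and $K_\infty := \bigcup_{n\geq 1} K_n$ satisfies $\Gal(K_\infty/K) = G_\infty(f,\alpha)$. Since $G_\infty(f,\alpha)$ is abelian precisely when $K_\infty \subseteq K^{\mathrm{ab}}$, both directions reduce to controlling the field $K_\infty$. (Separability of each $f^n-\alpha$, needed for the tree picture, follows from the number-field/characteristic-zero setting together with $\alpha$ being non-exceptional.)

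For the \emph{if} direction I would compute $K_\infty$ directly for the two model families. If $(f,\alpha)$ becomes $(x^d,\zeta)$ or $(\pm T_d,\zeta+\zeta^{-1})$ after a $K^{\mathrm{ab}}$-linear change of variable $x\mapsto \lambda x+\mu$ with $\lambda,\mu\in K^{\mathrm{ab}}$, then I would use that all iterated preimages of $\zeta$ under $x^d$ are again roots of unity, while the semiconjugacy $T_d(w+w^{-1})=w^d+w^{-d}$ shows that all iterated preimages of $\zeta+\zeta^{-1}$ under $T_d$ have the form $\eta+\eta^{-1}$ with $\eta$ a root of unity. Pulling these back through the conjugating map places $f^{-n}(\alpha)$ inside $K^{\mathrm{ab}}$ for every $n$; hence $K_\infty \subseteq K^{\mathrm{ab}}$ and $G_\infty(f,\alpha)$ is abelian. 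The outer sign in $\pm T_d$ and the normalization of $T_d$ are absorbed into the $K^{\mathrm{ab}}$-conjugation, so this settles $(\Leftarrow)$.

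For the \emph{only if} direction I would argue in three steps. Step A (ramification is post-critical): the places of $K$ ramifying in $K_n$ divide $\Disc(f^n-\alpha) = \pm\,\Res\!\left(f^n-\alpha,\,(f^n)'\right)$, and since the chain rule expresses $(f^n)'$ as a product of $f'$ evaluated along orbits, this ramification is governed by the critical values of $f$ and their forward orbit, i.e.\ by the post-critical set of $f$. Step B (abelian forces rigidity): assuming $K_\infty\subseteq K^{\mathrm{ab}}$, Kronecker--Weber over $\QQ$, or class field theory over a general number field, places each $K_n$ inside a ray class field whose conductor is supported on the post-critically controlled primes; bounding how this conductor can grow across the tower forces the post-critical orbit to be finite and rigidly structured, so that $f$ is post-critically finite of a very special shape. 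Step C (classification): I would then invoke the characterization that the only polynomials all of whose iterated-preimage fields of a suitable base point lie in $K(\mu_\infty)$ are, up to $K^{\mathrm{ab}}$-linear conjugacy, the power maps $x^d$ and the Chebyshev maps $\pm T_d$ (equivalently, those descending from endomorphisms of $\GG_m$), and then match $\alpha$ to the orbit data to pin it down as $\zeta$, respectively $\zeta+\zeta^{-1}$.

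The hard part is Step C, together with the rigidity half of Step B: passing from ``abelian tower with post-critically controlled ramification'' to the exact list $x^d$, $\pm T_d$ is precisely where Conjecture~\ref{conj:abelian} remains open in general, since it requires excluding every other post-critically finite (and degenerate non-PCF) polynomial. This is where the present paper contributes. The unicritical maps $f_{d,c_0}$ with $c_0$ a Gleason or Misiurewicz parameter are post-critically finite but, for $n\geq 2$, are neither power nor Chebyshev maps, so the conjecture predicts $G_\infty(f_{d,c_0},\alpha)$ is non-abelian. Theorems~\ref{thm:preperiodic} and~\ref{thm:periodic}(a) establish exactly the stability (irreducibility of every $f_{d,c_0}^n-\alpha$) needed to apply the finite-index criteria of \cite{BDGHT21, Bridy19}, which realize $G_\infty(f_{d,c_0},\alpha)$ as a finite-index subgroup of a profinite overgroup inside $\Aut(T_\infty^d)$ that is not virtually abelian; consequently $G_\infty(f_{d,c_0},\alpha)$ is itself non-abelian. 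This confirms the forward implication of Conjecture~\ref{conj:abelian} for these families -- the new cases the paper establishes -- while the explicit factorization in Theorem~\ref{thm:periodic}(b) and the unbounded factor count of Corollary~\ref{cor:quantitative} supply the complementary picture at the degenerate base point $\alpha=0$.
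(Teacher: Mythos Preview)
The statement you are attempting to prove is labeled a \emph{conjecture} in the paper, and the paper does not prove it; it establishes only the special cases recorded in Corollaries~\ref{thm:arboreal-periodic} and~\ref{thm:arboreal-preperiodic}. So there is no ``paper's own proof'' to compare against, and your Step~C is, as you yourself note, exactly where the conjecture remains open. Your outline of the $(\Leftarrow)$ direction is fine and standard; your Steps~A--B for $(\Rightarrow)$ are at best heuristic and do not constitute an argument.

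More seriously, your description of how the paper handles its special cases is incorrect. You claim that stability from Theorems~\ref{thm:preperiodic} and~\ref{thm:periodic}(a) feeds into the finite-index criteria of \cite{BDGHT21,Bridy19} to exhibit $G_\infty(f_{d,c_0},\alpha)$ as a finite-index subgroup of a non-virtually-abelian profinite group. The paper does nothing of the sort. Those finite-index theorems are mentioned only as background motivation for why eventual stability is interesting; they are not invoked in Section~\ref{section:4}. Instead, the paper argues directly: assuming $G_\infty(f_{d,c_0},\alpha)$ abelian, it uses irreducibility together with Lemma~\ref{lemma:odd-alternating} (for odd $d$) or the explicit quartic Galois-group classification of \cite{Conrad} (for $d=2$) to force certain discriminants or norm expressions to lie in $K^{\times 2}$, and then uses the $\mathfrak{p}$-adic valuation information coming from Theorem~\ref{thm:power-relation} and \cite{Gok19,Gok20} to show those quantities have odd $\mathfrak{p}$-valuation, a contradiction. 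No appeal to finite index is made, and indeed for PCF maps one does not expect finite index in $\Aut(T_\infty^d)$.
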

	Here, $K^{\text{ab}}$ is the maximal abelian extension of $K$, $T_d(x)$ is the Chebyshev polynomial of degree $d$, and $\alpha$ is called \emph{exceptional} if the full preimage set $\bigcup_{i=0}^{\infty} f^{-i}(\alpha)$ is finite.
	
	Recently, there has been substantial progress on Conjecture~\ref{conj:abelian}. Specifically, due to the works of Andrews-Petsche \cite{AndrewsPetsche20}, Ferraguti-Ostafe-Zannier \cite{FerOstZan24}, Ferraguti-Pagano \cite{FP23}, and Leung-Petsche \cite{LeungPetsche25}, the only remaining case is when $f$ is PCF and $\alpha$ is a preperiodic point of $f$.\par
	
	In the quadratic case, since any PCF quadratic polynomial is linearly conjugate to a polynomial of the form $f_{2,c_0}$ for a Misiurewicz or a Gleason parameter $c_0$, it remains to prove the conjecture for the pairs $(f_{2,c_0},\alpha)$, where $f_{2,c_0}$ is defined over a number field $K$, and $\alpha\in K$ is a preperiodic point of $f_{2,c_0}$.\par
	
	As a consequence of our work, we prove the following results concerning Conjecture~\ref{conj:abelian}.
	\begin{cor}
		\label{thm:arboreal-periodic}
		Let $d$ be a prime, and let $c_0$ be a root of $G_{d,n}$ for some positive integer $n$. Set $K = \QQ(c_0)$. Suppose that one of the following holds:
		\begin{itemize}
			\item $d = 2, n \geq 3$ and $v_{\mathfrak{p}}(\alpha)=1$ for some prime $\mathfrak{p} \subset \mathcal{O}_K$ lying above $d$;
			\item $d=2, \alpha=0$, and $n\geq 3$; 
			\item $d > 2$ and $v_{\mathfrak{p}}(\alpha) = 1$ for some prime $\mathfrak{p} \subset \mathcal{O}_K$ lying above $d$;
			\item $d > 2$, $\alpha = 0$, and $n \geq 2$.
		\end{itemize}
		Then $G_{\infty}(f_{d,c_0}, \alpha)$ is non-abelian.
	\end{cor}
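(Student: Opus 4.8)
The plan is to prove the contrapositive in a strong local form: assuming $G_{\infty}(f_{d,c_0},\alpha)$ is abelian, I would produce an explicit finite level $k$ at which the Galois group $G_k$ of $f_{d,c_0}^k-\alpha$ over $K$ is already non-abelian, which suffices because $G_k$ is a quotient of $G_{\infty}$. Writing $f=f_{d,c_0}$ and $K_k$ for the splitting field of $f^k-\alpha$, an abelian $G_{\infty}$ forces every $K_k/K$ to be abelian. I would first use Theorem~\ref{thm:periodic} to pin down the bottom of the preimage tree. In the cases with $v_{\mathfrak{p}}(\alpha)=1$, stability (part (a)) gives that $f-\alpha$ and $f^2-\alpha$ are irreducible, so $K_1=K(\beta_1)$ with $\beta_1^d=\alpha-c_0$; in the cases $\alpha=0$, the factorization (part (b)) shows the level-one roots are exactly the $\zeta^j a_{n-1}(c_0)$, with the ``spine'' root $a_{n-1}(c_0)\in K$ fixed and the others governed by the irreducible factor $F_{0,n-1}^{(d,c_0)}$. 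The strategy is then to locate the first level at which two non-$K$-conjugate radicands appear, and to show that the abelian hypothesis cannot accommodate their coexistence.

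The engine is a Kummer-theoretic observation I would isolate as a lemma: if $L/K$ is abelian with $\mu_d\subseteq L$ and $M=L\big(g(r)^{1/d}:g\in\Gal(L/K)\big)$ for some $r\in L^{\times}$, then $M/K$ abelian forces $\Gal(L/K)$ to act on the class of $r$ in $L^{\times}/(L^{\times})^d$ through the mod-$d$ cyclotomic character, i.e. $g(r)\equiv r^{\chi(g)}\pmod{(L^{\times})^d}$ for all $g$; when $\mu_d\subseteq K$ this reads simply $g(r)/r\in(L^{\times})^d$. Applying this at the critical level produces a single explicit divisibility that must hold. For $d\geq 3$ in the stable case one first observes that irreducibility of $f-\alpha$ together with abelianness forces $\mu_d\subseteq K$, after which the condition becomes $(\zeta\beta_1-c_0)/(\beta_1-c_0)\in\big(K(\beta_1)^{\times}\big)^d$; for $d=2$ in the stable case it becomes the requirement that $c_0^2+c_0-\alpha$ be a square in $K(\beta_1)$; for $\alpha=0$ with $d\geq 3$ one works over $L=K(\mu_d)$ and obtains that $\zeta^{j}a_{n-1}(c_0)-c_0$ be a $d$-th power up to the cyclotomic twist; and for $\alpha=0$, $d=2$, where both level-one roots $\pm a_{n-1}(c_0)$ are rational, the obstruction appears only one level deeper, as the requirement that $c_0^2+c_0+a_{n-1}(c_0)$ be a square in $K(\gamma)$, where $\gamma^2=-a_{n-1}(c_0)-c_0$.

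I would then refute each divisibility by a valuation argument at a prime $\mathfrak{p}\mid d$. In the stable cases the hypothesis $v_{\mathfrak{p}}(\alpha)=1$ is exactly what is needed: it forces $\alpha$ to be a non-unit of valuation prime to $d$, so that the relevant radicand has valuation not divisible by $d$ (or, in the residual unit case for $d\geq 3$, the pertinent ratio is congruent to a primitive $d$-th root of unity, which is not a $d$-th power). In the $\alpha=0$ cases the analogous valuation data must instead be read off from the Gleason recursion $a_{i+1}=a_i^d+c_0$ with $a_n(c_0)=0$, reusing the $\mathfrak{p}$-adic analysis that underlies Theorem~\ref{thm:periodic}. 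Here the numerical thresholds enter naturally: for $d>2$ the first off-spine branching already occurs at level two and needs only $a_{n-1}(c_0)\neq 0$, which holds for $n\geq 2$; for $d=2$ the obstruction sits at level three and goes through once $a_{n-2}(c_0)\neq 0$, i.e. once the backward spine has not yet returned to the critical point $0$, and since $a_{n-2}(c_0)=0$ precisely when $n=2$, this is what forces $n\geq 3$. (As an independent consistency check, Corollary~\ref{cor:quantitative} already shows the $\alpha=0$ tower is far from the procyclic behaviour that an abelian image would dictate, since the number of factors grows without bound.)

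The main obstacle I anticipate is the uniform $\mathfrak{p}$-adic valuation computation for the post-critical values of an \emph{arbitrary} Gleason parameter: controlling $v_{\mathfrak{p}}(c_0)$ and the $v_{\mathfrak{p}}(a_j(c_0))$ (and hence the valuations of the explicit radicands above) simultaneously for every root $c_0$ of $G_{d,n}$ rather than for a single numerical example, and ruling out accidental $d$-th-power coincidences that valuations alone do not detect. A secondary technical point is the cocycle and cyclotomic-character bookkeeping in the Kummer lemma when $\mu_d\not\subseteq K$, as in the $\alpha=0$, $d\geq 3$ case, where one must track the twist by $\chi$ carefully instead of merely asking that $g(r)/r$ be a $d$-th power.
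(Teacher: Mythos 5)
Your Kummer-theoretic frame is a genuinely different route from the paper, which instead runs Conrad's classification of the Galois groups of biquadratics $x^4+bx^2+c$ for $d=2$ and, for odd $d$, the observation (Lemma~\ref{lemma:odd-alternating}, Remark~\ref{remark:square-disc}) that an abelian transitive group of odd degree lies in the alternating group, so the discriminant of each iterate must be a square in $K$ --- computed via Lemma~\ref{lemma:discriminant of iterates}. Your radical-extension lemma is a correct necessary condition, and in fact your third case closes instantly: once $f_{d,c_0}-\alpha$ is irreducible (Theorem~\ref{thm:periodic}(a)), abelianness forces $K(\beta_1)/K$ to be normal, hence $\zeta\in K(\beta_1)$ and $[K(\zeta):K]$ divides both $d$ and $d-1$, so $\mu_d\subseteq K$ --- which directly contradicts Lemma~\ref{lem:trivial_cyclo}. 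You noticed the forcing of $\mu_d\subseteq K$ but not that it is already absurd; the level-two condition and its refutation are unnecessary there (and, as a valuation claim, unsupported, since residually $\zeta\equiv 1$ at primes over $d$).

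The genuine gap is that your valuation refutations are pinned at the wrong depth: at the ``first branching level'' the radicands are $\mathfrak{p}$-adic \emph{units} for all but the smallest periods, because $a_i(c_0)$ is an algebraic unit whenever $n\nmid i$ (\cite[Lemma 3.1]{Gok19}). Concretely, in your Case~1 ($d=2$, $n\geq 3$) the radicand is $c_0^2+c_0-\alpha=a_2(c_0)-\alpha$, and since $a_2(c_0)$ is a unit while $v_{\mathfrak{p}}(\alpha)=1$, one gets $v_{\mathfrak{p}}(a_2(c_0)-\alpha)=0$ at every prime over $\mathfrak{p}$: even, hence no contradiction (the alternate Conrad branch $4(\alpha-c_0)(a_2(c_0)-\alpha)$ also has even valuation). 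In the $\alpha=0$ cases the identity $\prod_{j=0}^{d-1}\bigl(c_0-\zeta^j a_{n-1}(c_0)\bigr)=a_2(c_0)$, a unit for $n\geq 3$, forces every level-two radicand $\zeta^j a_{n-1}(c_0)-c_0$ to be a unit at each prime $\mathfrak{q}\mid d$ of $K(\zeta)$, so your claimed level-two obstruction for $d>2$ exists only when $n=2$; likewise your $d=2$ level-three radicand $a_2(c_0)+a_{n-1}(c_0)$ is a sum of two units with uncontrolled valuation unless $n=3$ (when it telescopes to $2a_2(c_0)$). The missing idea --- the paper's key move --- is to descend to a depth tied to the period, where the return of the critical orbit to $0$ manufactures a quantity of \emph{odd} valuation: norming from a root of $f_{2,c_0}^{n-2}-\alpha$ so that $f_{2,c_0}^n(0)-\alpha=-\alpha$ appears (Case 1); basing the tree at $-a_2(c_0)$ so the quartic has constant term $2a_2(c_0)$ (Case 2); and taking the discriminant of $f_{d,c_0}^{2n-1}-\zeta a_{n-1}(c_0)$, which telescopes to $(1-\zeta)a_{n-1}(c_0)$ times squares (Case 4). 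You flagged the uniform valuation computation as your main obstacle, and rightly so: it is the crux, not a technicality, and as stated your first-branching refutations fail for general $n$. (Your consistency check is also not evidence: for $x^d$ based at a root of unity the factor count grows without bound while the image \emph{is} abelian, so unbounded factorization alone cannot preclude abelianness.)
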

	
	\begin{cor}
		\label{thm:arboreal-preperiodic}
		Let $d$ be a prime, and let $\zeta \neq 1$ be a $d$-th root of unity. Let $c_0$ be a root of $G_{d,m,n}^{\zeta}$ for some $m \geq 2$, $n \geq 1$. If $d = 2$, suppose further that $n \geq 3$. Set $K = \QQ(c_0)$, and suppose that $\alpha \in K$ satisfies $v_{\mathfrak{p}}(\alpha) \geq 2$ for some prime $\mathfrak{p} \subset \mathcal{O}_K$ lying above $d$. Then $G_{\infty}(f_{d,c_0}, \alpha)$ is non-abelian.
	\end{cor}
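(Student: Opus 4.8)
The plan is to feed the stability provided by Theorem~\ref{thm:preperiodic} into a local ramification analysis at $\mathfrak{p}$ and to contradict abelianness by means of the Hasse--Arf theorem. First, by Theorem~\ref{thm:preperiodic} the hypothesis $v_{\mathfrak{p}}(\alpha)\geq 2$ makes $(f_{d,c_0},\alpha)$ stable over $K$, so $f_{d,c_0}^N-\alpha$ is irreducible and, being in characteristic $0$, separable for every $N\geq 1$; hence $\rho$ is a genuine arboreal representation and each $G_N:=G_N(f_{d,c_0},\alpha)$ acts transitively on the $d^N$ leaves of level $N$. Assume toward a contradiction that $G_{\infty}$ is abelian, so each $G_N$ is abelian. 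Since $G_N$ is the Galois group of the splitting field $L_N$ of $f_{d,c_0}^N-\alpha$, it acts faithfully on the roots, and a faithful transitive action of an abelian group is regular; therefore $|G_N|=d^N$, we have $L_N=K(\beta_N)$ for every root $\beta_N$, each $L_N/K$ is abelian, and the full preimage field lies in $K^{\text{ab}}$.

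Next I would complete at $\mathfrak{p}$. The key arithmetic input is to check that $v_{\mathfrak{p}}(c_0)=1$; granting this, the hypothesis $v_{\mathfrak{p}}(\alpha)\geq 2$ gives $v_{\mathfrak{p}}(\alpha-c_0)=1$, so the degree-$d$ polynomial $x^d-(\alpha-c_0)$ is Eisenstein at $\mathfrak{p}$. Running the same analysis up the tower through the relations $\beta_i^d=\beta_{i-1}-c_0$ shows that at each stage the valuation of $\beta_{i-1}-c_0$ is $d^{-(i-1)}$, so every successive step $x^d-(\beta_{i-1}-c_0)$ is again Eisenstein and $K_{\mathfrak{p}}(\beta_N)/K_{\mathfrak{p}}$ is totally, and (as $\mathfrak{p}\mid d$) wildly, ramified of degree $d^N$. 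Consequently the decomposition group of $\mathfrak{p}$ is all of $G_N$, which is abelian by assumption, and the Hasse--Arf theorem forces every break in its upper-numbering ramification filtration to be an integer. I would then compute this filtration explicitly from the uniformizer $\beta_N$ and the valuations above, and exhibit a break lying at a non-integral value once $N$ is large enough, contradicting Hasse--Arf and hence the abelianness of $G_{\infty}$.

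The technical heart is this last ramification computation. One must pin down $v_{\mathfrak{p}}(c_0)$ and the valuations $v_{\mathfrak{p}}(a_i(c_0))$ of the post-critical orbit at the prime above $d$, which is exactly what guarantees that the local tower is totally wildly ramified rather than (partly) unramified; this is where the period enters, and the condition $n\geq 3$ when $d=2$ appears to be precisely what secures a prime $\mathfrak{p}\mid d$ with $v_{\mathfrak{p}}(c_0)\geq 1$ and hence the needed wild ramification — for instance the excluded type $(2,2)$ parameter $c_0=i$ has $v_{\mathfrak{p}}(c_0)=0$ at $\mathfrak{p}=(1+i)$, where the argument would break down. One must then evaluate $v(\sigma\beta_N-\beta_N)$ for $\sigma\in G_N$ to read off the lower-numbering breaks, pass to the upper numbering via the Herbrand function, and verify non-integrality of a break uniformly over the admissible data $(d,m,n)$ and over $\alpha$. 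As a softer, non-uniform cross-check one may note that every subfield of the abelian tower must be Galois over $K$, so irreducibility of $x^d-(\alpha-c_0)$ forces $\mu_d\subseteq K$, which already rules out abelianness for many odd $d$; but it is the ramification route that is expected to deliver all cases, including $d=2$, at once.
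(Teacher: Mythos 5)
There is a genuine gap, and it occurs at the step you yourself flag as the ``key arithmetic input.'' You claim $v_{\mathfrak{p}}(c_0)=1$, but this is false in almost every case covered by Corollary~\ref{thm:arboreal-preperiodic}: since $c_0=a_1(c_0)$, Theorem~\ref{thm:power-relation}(a) says $c_0$ is an algebraic \emph{unit} whenever $n\nmid 1$, i.e., whenever $n\geq 2$ --- which includes \emph{all} admissible cases with $d=2$ (where $n\geq 3$ is assumed). Your own observation about the type-$(2,2)$ parameter $c_0=i$ points in exactly the opposite direction from the one you draw: the hypothesis $n\geq 3$ does not restore $v_{\mathfrak{p}}(c_0)\geq 1$; rather, for every $n\geq 2$ the valuation sits not on $a_1(c_0)=c_0$ but on $a_n(c_0)$ (and its multiples $a_{jn}$), where Theorem~\ref{thm:power-relation}(b) together with \cite[Corollary 3.5]{Gok20} gives $v_{\mathfrak{p}}(a_n(c_0))=1$. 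Consequently your rung-by-rung tower $x^d-(\beta_{i-1}-c_0)$ is not Eisenstein at the first rung, and the claimed total wild ramification cannot be established this way. (The conclusion itself can be salvaged: by Lemma~\ref{lem:expansion_preperiodic} the constant term of $f_{d,c_0}^{ni}-\alpha$ is $u\,a_n(c_0)-\alpha$ with $v_{\mathfrak{p}}=1$, so $f_{d,c_0}^{ni}-\alpha$ is Eisenstein at $\mathfrak{p}$ and $K_{\mathfrak{p}}(\beta_{ni})/K_{\mathfrak{p}}$ is totally ramified of degree $d^{ni}$ --- this is exactly the computation in the paper's proof of Theorem~\ref{thm:preperiodic} --- but one must work along the subsequence $k=ni$, not step by step through $c_0$.)

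The second, and more serious, problem is that the actual contradiction is never produced. Everything up to ``the decomposition group is all of $G_N$, hence abelian'' is consistent with Hasse--Arf: abelian, totally and wildly ramified local extensions are ubiquitous (e.g., $\QQ_p(\zeta_{p^N})/\QQ_p$), so the framework alone rules out nothing. The entire burden of proof lies in the deferred computation of $v(\sigma\beta_N-\beta_N)$, the lower breaks, the Herbrand passage to upper numbering, and the verification of a non-integral upper break uniformly in $(d,m,n)$ and $\alpha$ --- none of which is carried out, and none of which is routine for this tower (note also that for odd $d$ the prime $d$ \emph{ramifies} in $K$, with $v_{\mathfrak{p}}(d)=A$ even, which complicates both the break computation and your ``softer cross-check'': since $d$ ramifies in $K$, nothing cheap excludes $\mu_d\subseteq K$ for Misiurewicz parameters, and the paper's Lemma~\ref{lem:trivial_cyclo} applies only to Gleason parameters). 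The paper avoids local class field theory entirely: for $d>2$ it combines the discriminant formula of Lemma~\ref{lemma:discriminant of iterates} with the valuation data of Theorem~\ref{thm:power-relation} to show $v_{\mathfrak{p}}\bigl(\Delta(f_{d,c_0}^{3n}-\alpha)\bigr)$ is odd, contradicting the square-discriminant constraint of Remark~\ref{remark:square-disc} (abelian transitive groups of odd degree lie in the alternating group); for $d=2$ it analyzes the quartic subextension $f_{2,c_0}^2-\beta$ via Capelli and Conrad's quartic criterion, pushing down by norms to elements of odd $\mathfrak{p}$-valuation that would have to be squares in $K$. Your proposal, as written, proves neither case.
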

	
	In particular, Corollary~\ref{thm:arboreal-preperiodic} shows that $G_{\infty}(f_{d,c_0}, 0)$ is non-abelian when $d$ is an odd prime (resp. $2$) and $c_0$ is a Misiurewicz parameter of type $(m,n)$ for any $n \geq 1$ (resp. $n\geq 3$). Since $f_{d,c_0}$ is PCF and $0$ is a strictly preperiodic point for $f_{d,c_0}$, this proves new cases of Conjecture~\ref{conj:abelian}. On the other hand, Corollary~\ref{thm:arboreal-periodic} provides an alternative proof of an already known case (see \cite[Theorem 1.2]{FP23}).\par
	

	The outline of the paper is as follows: In Section~\ref{section:2}, we provide the necessary background and prove auxiliary lemmas that will be crucial for the proofs of the main results. In Section~\ref{section:3}, we prove Theorem~\ref{thm:preperiodic} and Theorem~\ref{thm:periodic}. Finally, in Section~\ref{section:4}, we prove Corollary~\ref{thm:arboreal-periodic} and Corollary~\ref{thm:arboreal-preperiodic}.
	
	\section{Background and auxiliary lemmas}
	\label{section:2}
	We start this section by recalling a result from \cite{Gok19}, which will play a significant role in the proofs of Theorem~\ref{thm:preperiodic} and Corollary~\ref{thm:arboreal-preperiodic}. Throughout the paper, for a number field $K$ and $a\in \mathcal{O}_K$ (the ring of integers of $K$), we denote by $\la a\ra$ the principal ideal generated by $a$.
	\begin{thm}[\cite{Gok19}]
		\label{thm:power-relation}
		Let $f_{d,c_0}=x^d+c_0\in \CC[x]$ be a PCF polynomial having exact type $(m,n)$ with $m\geq 2$, $n\geq 1$. Set $K=\QQ(c_0)$. Then the following holds.
		\begin{enumerate}[(a)]
			\item If $n\nmid i$, then $a_i(c_0)$ is an algebraic unit.
			\item If $d$ is a prime and $n\text{ }|\text{ }i$, then one has $\la a_i(c_0)\ra^A = \la d\ra$, where
			
			\[A = \begin{cases}
				d^{m-1}(d-1)
				& \text{ if } n|m-1,
				\\
				(d^{m-1}-1)(d-1) & \text{ if } n \nmid m-1.
			\end{cases}\]
			
		\end{enumerate}
	\end{thm}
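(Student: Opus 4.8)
The plan is to prove both statements simultaneously by determining the exponent $v_{\mathfrak{q}}(a_i)$ of every finite prime $\mathfrak{q}\subset\mathcal{O}_K$ in each orbit point $a_i$. Since $c_0$ is a root of a monic Misiurewicz polynomial in $\ZZ[c]$, we have $c_0\in\mathcal{O}_K$, and the recursion $a_{i+1}=a_i^d+c_0$ shows inductively that every $a_i\in\mathcal{O}_K$; hence $v_{\mathfrak{q}}(a_i)\geq 0$ for all $\mathfrak{q}$. The two assertions then reduce to showing that $a_i$ is a $\mathfrak{q}$-unit for every $\mathfrak{q}$ not above $d$ and for every $\mathfrak{q}$ above $d$ with $n\nmid i$, while for $\mathfrak{q}\mid d$ and $n\mid i$ the value $v_{\mathfrak{q}}(a_i)$ equals $v_{\mathfrak{q}}(d)/A$. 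I would also record at the outset that $\zeta=a_{m+n-1}/a_{m-1}\in K$, since $a_{m-1}\neq 0$ by strict preperiodicity, so the relation $a_{m+n-1}=\zeta a_{m-1}$ is an honest identity in $K$.

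First I would dispose of the primes away from $d$. Fix $\mathfrak{q}\nmid d$ and reduce modulo $\mathfrak{q}$; the residue characteristic differs from $d$, so $\overline{f_{d,c_0}}$ is separable and the reduced critical orbit retains the exact preperiodic portrait. The key point is that $(\zeta-1)^{d-1}$ is an associate of $d$ in $\ZZ[\zeta]$, so $\zeta-1$ is a $\mathfrak{q}$-unit and $\bar\zeta\neq 1$. A vanishing $\bar a_i$ would force $\bar a_{i+1}=\bar c_0=\bar a_1$, collapsing the tail and contradicting the faithful persistence of the type $(m,n)$ portrait together with $\bar\zeta\neq 1$. Hence every $a_i$ is a $\mathfrak{q}$-unit for $\mathfrak{q}\nmid d$, which settles the away-from-$d$ part of (b) and reduces (a) to the primes above $d$.

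The heart of the proof is the local analysis at a prime $\mathfrak{p}\mid d$. Writing $v=v_{\mathfrak{p}}$, I would use $v(\zeta-1)=v(d)/(d-1)$ (from $\langle\zeta-1\rangle^{d-1}=\langle d\rangle$). Tracing $v(a_{i+1})=v(a_i^d+c_0)$ along the tail and around the cycle $a_m,\dots,a_{m+n-1}$, one sees that (for $n\geq 2$, where $c_0$ is itself a unit) a point of positive valuation can only occur at an index $\equiv 0\pmod n$, so $v(a_i)=0$ whenever $n\nmid i$; together with the previous paragraph this completes (a). To pin down the common positive value at the unique cyclic index $i_0\equiv 0\pmod n$, and hence at all $a_i$ with $n\mid i$, I would combine the chain-rule expression $\lambda=d^n\big(\prod_{j=m}^{m+n-1}a_j\big)^{d-1}$ for the cycle multiplier, which gives $v(\lambda)=n\,v(d)+(d-1)u$ where $u=v(a_{i_0})$, with the fine information from $a_{m+n-1}=\zeta a_{m-1}$. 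The case distinction in the formula reflects whether the tail point $a_{m-1}$ is itself special: when $n\mid(m-1)$ its index is divisible by $n$ and it contributes an extra factor, whereas when $n\nmid(m-1)$ it is a unit. Carrying out the Newton-polygon bookkeeping then yields the stated exponent $A$ in each case (the case $n=1$, where $c_0$ itself is special, being handled by the same recursion).

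The main obstacle is exactly this last step. The valuation recursion alone shows only that the special points have positive valuation and underdetermines the depth $u$, because the passage from a unit to a point of positive valuation proceeds through a cancellation $a_{i_0-1}^d+c_0\equiv 0$ whose order is invisible to the valuations of the individual summands. Extracting $u$ requires converting the exact relation $a_{m+n-1}=\zeta a_{m-1}$ (equivalently, the defining relation of the Misiurewicz polynomial) into a single equation for $u$ via the multiplier identity, and then separating the two congruence classes of $m-1$ modulo $n$. I expect the most delicate part to be tracking how the factor $\zeta-1$, of valuation $v(d)/(d-1)$, propagates around the cycle and forces all special points to share the same depth $v(d)/A$.
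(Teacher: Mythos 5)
Your overall reduction (integrality of the $a_i$, the observation that $\zeta=a_{m+n-1}/a_{m-1}$ lies in $K$, and the splitting into primes away from $d$ versus above $d$) is sound, but both pillars of the argument have genuine gaps, and the second one is fatal to the quantitative claim. At a prime $\mathfrak{q}\nmid d$, your contradiction does not materialize. Carry the argument out honestly: if $i_0$ is minimal with $v_{\mathfrak{q}}(a_{i_0})>0$, then $a_{j+i_0}\equiv a_j \pmod{\mathfrak{q}}$, so the reduced orbit is purely periodic of exact period $i_0$ and $\bar f$ is injective on it; pulling $\bar a_{m+n}=\bar a_m$ back one step gives $\bar a_{m+n-1}=\bar a_{m-1}$, and since $\bar\zeta\neq 1$ you conclude $\bar a_{m-1}=0$, hence $i_0\mid m-1$ and then $i_0\mid n$. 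That is \emph{all} you get: the configuration $i_0\mid\gcd(m-1,n)$ is consistent with every exact relation at your disposal ($a_{m+n}=a_m$, $a_{m+n-1}=\zeta a_{m-1}$, and minimality of the type, which is a statement over $\CC$, not over the residue field). Your appeal to "faithful persistence of the type $(m,n)$ portrait" is not a theorem — indeed the portrait provably \emph{does} collapse modulo the primes above $d$, since there $x\mapsto x^d+\bar c_0$ is Frobenius-plus-translation, hence a bijection of the residue field, forcing $\bar 0$ to become periodic — so no such persistence principle can exist. Ruling out the away-from-$d$ primes is exactly the global content of the theorem, and the proof in the cited source [Gok19] (this paper itself contains no proof; the statement is imported) obtains it from exact norm/resultant computations with Gleason--Misiurewicz polynomials, not from reduction of portraits. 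The same circularity infects your analysis at $\mathfrak{p}\mid d$: "for $n\geq 2$, where $c_0$ is itself a unit" assumes part (a) for $i=1$, and the recursion at best shows the positive-valuation indices are the multiples of some $\ell$ with $\ell\mid n$ (bijectivity mod $\mathfrak{p}$ does give $\bar a_n=0$, so positivity is fine); nothing in your argument excludes $\ell$ a proper divisor of $n$, which is precisely part (a) at $\mathfrak{p}$.

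The heart of part (b) — the depth $u=v_{\mathfrak{p}}(a_n)$ — is not proved but only, as you yourself say, identified as an obstacle. The multiplier identity gives $v(\lambda)=n\,v(d)+(d-1)\sum_{j=m}^{m+n-1}v(a_j)$, which is one equation in the two unknowns $v(\lambda)$ and $u$; no Newton-polygon bookkeeping on it alone can determine $u$, and you never supply a second equation (nor a mechanism forcing $v_{\mathfrak{p}}(a_n)/v_{\mathfrak{p}}(d)$ to be the \emph{same} at every $\mathfrak{p}\mid d$, which the ideal equation $\la a_n\ra^A=\la d\ra$ requires). Had you executed the bookkeeping, it would moreover have collided with the displayed formula itself: for $d=2$, $c_0=-2$ of exact type $(2,1)$ one has $K=\QQ$, $a_i=\pm 2$, so $\la a_i\ra=\la 2\ra$ and $A=1=(d^{m-1}-1)(d-1)$ even though $n\mid m-1$; while for $d=2$, $c_0=i$ of exact type $(2,2)$ one has $\la a_2\ra^2=\la(-1+i)^2\ra=\la 2\ra$, so $A=2=d^{m-1}(d-1)$ even though $n\nmid m-1$. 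Thus the two cases of $A$ as displayed in the statement appear interchanged (compare with [Gok19]), and your promise that the computation "yields the stated exponent $A$ in each case" could not have been kept as written — further evidence that the decisive step was never carried out. In summary: the local framework is the right language for organizing the conclusion, but the theorem's content is global, and the proposal is missing the global input (norms/resultants of the Misiurewicz polynomials, or an equivalent) at both the away-from-$d$ primes and in the determination of $A$.
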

	The following two lemmas describe the general form of the iterates of $f_{d,c_0}$ when $c_0$ is a Gleason or Misiurewicz parameter. In the proofs of Theorems~\ref{thm:preperiodic} and \ref{thm:periodic}, these lemmas will enable us to produce Eisenstein polynomials by building on results from \cite{Gok19,Gok20} and by using an appropriate iterate of $f_{d,c_0}$.
	\begin{lemma}
		\label{lem:expansion_preperiodic}
		Let $d$ be a prime, $k \geq 1$, and let $\zeta \neq 1$ be a $d$-th root of unity. Suppose $c_0$ is a root of $G_{d,m,n}^{\zeta}$ for some $m\geq 2$, $n\geq 1$. Set $K=\QQ(c_0)$. Then there exists a polynomial $F(x) \in \mathcal{O}_K[x]$ such that
		\[
		f_{d,c_0}^k(x) = x^{d^k} + d x^d F(x) + u a_i(c_0),
		\]
		where $i = \gcd(k,n)$ and $u\in \mathcal{O}_K$ is an algebraic unit.
	\end{lemma}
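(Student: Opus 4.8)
The plan is to prove the claimed expansion $f_{d,c_0}^k(x) = x^{d^k} + dx^d F(x) + u\, a_i(c_0)$ by induction on $k$, tracking how the constant term and the low-order coefficients behave under one more application of $f_{d,c_0}$. The crucial structural observation is that the constant term of $f_{d,c_0}^k$ is precisely $f_{d,c_0}^k(0) = a_k(c_0)$, and the Misiurewicz periodicity relation $\zeta a_{m-1} = a_{m+n-1}$ (together with the general recursion $a_{j+n} \equiv$ appropriate multiple of $a_j$ modulo $d$) forces $a_k(c_0)$ to be a unit multiple of $a_{\gcd(k,n)}(c_0)$. So the two things I must establish are: (i) the divisibility-by-$d$ of every coefficient strictly between the leading $x^{d^k}$ term and the $x^d$ term, packaged into the single term $dx^d F(x)$ with $F \in \calO_K[x]$; and (ii) the identity $a_k(c_0) = u\, a_{\gcd(k,n)}(c_0)$ for a unit $u$.

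First I would set up the base case $k=1$, where $f_{d,c_0}(x) = x^d + c_0 = x^d + a_1(c_0)$, so $\gcd(1,n) = 1$, $u = 1$, and $F = 0$ (the middle term $dx^dF$ is absent since $d^1 = d$ means the leading and $x^d$ terms coincide — a small edge case I would handle by absorbing it appropriately). For the inductive step, assume $f_{d,c_0}^k(x) = x^{d^k} + dx^dF(x) + u\, a_i(c_0)$ with $i = \gcd(k,n)$, and compute $f_{d,c_0}^{k+1}(x) = \big(f_{d,c_0}^k(x)\big)^d + c_0$. Expanding the $d$-th power by the multinomial theorem, the leading term is $x^{d^{k+1}}$; every cross term in the binomial expansion of $\big(x^{d^k} + (dx^dF + u\,a_i)\big)^d$ other than $x^{d^{k+1}}$ carries a binomial coefficient $\binom{d}{\ell}$ with $1 \le \ell \le d-1$, each of which is divisible by $d$ since $d$ is prime. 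The single exceptional contribution is $(u\,a_i)^d$, the $d$-th power of the old constant term, which combines with $c_0 = a_1$ to form the new constant term. The divisibility of the non-leading, non-constant coefficients by $d$ and the factorization of a common $x^d$ then lets me collect everything into $d x^d F'(x)$ for a new $F' \in \calO_K[x]$; here I would use that $u a_i$ is an algebraic integer and that the lowest-degree variable term produced has degree at least $d$.

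The heart of the argument, and the step I expect to be the main obstacle, is verifying the constant-term recursion, namely that $a_{k+1}(c_0) = (u\,a_i(c_0))^d + c_0$ equals a unit times $a_{\gcd(k+1,n)}(c_0)$. This is where the arithmetic of Theorem~\ref{thm:power-relation} enters decisively. By part (a), $a_j(c_0)$ is a unit whenever $n \nmid j$, and by part (b), when $n \mid j$ the ideal $\la a_j(c_0)\ra$ depends only on $n$ (not on $j$) through the relation $\la a_j(c_0)\ra^A = \la d\ra$. I would split into cases according to whether $n$ divides $k$, $k+1$, or neither, and in each case compare the ideal generated by $(u\,a_i)^d + c_0$ with that of $a_{\gcd(k+1,n)}$. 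The delicate point is that $(u\,a_i)^d$ has valuation $d$ times that of $a_i$ at each prime above $d$, whereas $c_0 = a_1$ may itself be a unit or not; one must check that the valuations match up so that the sum has exactly the valuation of $a_{\gcd(k+1,n)}$ at every prime over $d$, and is a unit away from $d$. Combined with the fact that $\calO_K$ restricted to primes above $d$ controls the whole unit/non-unit dichotomy (via parts (a) and (b)), this pins down $a_{k+1}(c_0)$ as a unit multiple of $a_{\gcd(k+1,n)}(c_0)$, completing the induction.
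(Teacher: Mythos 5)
Your expansion half is correct and is exactly the paper's argument: expanding (or inducting) and using $d \mid \binom{d}{\ell}$ for $1 \leq \ell \leq d-1$ absorbs everything between the leading term and the constant into $d x^d F(x)$ with $F \in \mathcal{O}_K[x]$, giving $f_{d,c_0}^k(x) = x^{d^k} + d x^d F(x) + a_k(c_0)$. But the part you flag as the ``heart of the argument'' is both a dead end as proposed and unnecessary. No recursion is needed for the constant term: it is tautologically $f_{d,c_0}^k(0) = a_k(c_0)$, a point of the post-critical orbit, so Theorem~\ref{thm:power-relation} applies to it \emph{directly}. If $n \nmid k$, then $\gcd(k,n)$ is a proper divisor of $n$, so $n \nmid \gcd(k,n)$, and both $a_k(c_0)$ and $a_{\gcd(k,n)}(c_0)$ are algebraic units by part (a); their ratio is the required unit $u$. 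If $n \mid k$, then $\gcd(k,n) = n$, and part (b) gives $\la a_k(c_0)\ra^A = \la d\ra = \la a_n(c_0)\ra^A$ with the \emph{same} exponent $A$ (it depends only on $d,m,n$, not on the index), whence $\la a_k(c_0)\ra = \la a_n(c_0)\ra$ by unique factorization of ideals, i.e.\ $a_k(c_0) = u\,a_n(c_0)$ for a unit $u$. That two-case observation is the paper's entire proof.

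The concrete flaw in your plan is the claim that one can ``check that the valuations match up so that the sum has exactly the valuation of $a_{\gcd(k+1,n)}$.'' The ultrametric inequality determines $v_{\mathfrak{q}}(x+y)$ only when $v_{\mathfrak{q}}(x) \neq v_{\mathfrak{q}}(y)$, and in the crucial cases the two summands of $a_{k+1}(c_0) = a_k(c_0)^d + c_0$ have \emph{equal} valuation. For instance, if $n \geq 2$ and $n \mid (k+1)$, then $a_k(c_0)^d$ and $c_0 = a_1(c_0)$ are both units by part (a), yet you need their sum to have positive valuation matching $\la a_n(c_0)\ra$; and if $n \nmid k$ and $n \nmid (k+1)$, you need the sum of two units to again be a unit. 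Neither conclusion is a formal consequence of the summands' valuations --- the cancellation information lives precisely in Theorem~\ref{thm:power-relation} applied to the element $a_{k+1}(c_0)$ itself, which is what your closing sentence gestures at. Once you invoke the theorem that way, the induction and the case analysis of the sum evaporate entirely.
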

	
	\begin{proof}
		By direct expansion and using the fact that $\binom{d}{i}$ is divisible by $d$ for $i = 1, 2, \dots, d-1$, we have
		\[
		f_{d,c_0}^k(x) = x^{d^k} + d x^d F(x) + a_k(c_0)
		\]
		for some $F(x) \in \mathcal{O}_K[x]$. By Theorem~\ref{thm:power-relation}, the ideals generated by $a_k(c_0)$ and $a_i(c_0)$ coincide in $\mathcal{O}_K$, where $i = \gcd(k,n)$. The result follows immediately.
	\end{proof}
	\begin{lemma}
		\label{lem:expansion_periodic}
		Let $d$ be a prime and $k \geq 1$. Let $c_0$ be a root of $G_{d,n}$ for some $n\geq 1$. Set $K = \mathbb{Q}(c_0)$. Then there exists a polynomial $F(x) \in \mathcal{O}_K[x]$ such that
		\[
		f_{d,c_0}^k(x) = x^{d^k} + d x^d F(x) + a_i(c_0),
		\]
		where $i$ is the smallest positive integer satisfying $k \equiv i \pmod{n}$. In particular, if $k$ is divisible by $n$, then
		\[
		f_{d,c_0}^k(x) = x^{d^k} + d x^d F(x)
		\]
		for some $F(x) \in \mathcal{O}_K[x]$.
	\end{lemma}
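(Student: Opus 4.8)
The plan is to prove the displayed formula by induction on $k$, first establishing the purely structural decomposition
\[
f_{d,c_0}^k(x) = x^{d^k} + d x^d G_k(x) + a_k(c_0), \qquad G_k \in \mathcal{O}_K[x],
\]
and only at the very end using the feature special to the Gleason case, namely the periodicity of the sequence $\big(a_j(c_0)\big)_{j \geq 1}$, to rewrite the constant term $a_k(c_0)$ as $a_i(c_0)$. Note that this structural shape holds for \emph{any} $c_0 \in \mathcal{O}_K$ and is exactly the computation already recorded in the proof of Lemma~\ref{lem:expansion_preperiodic}; so in principle one may cite it, but I would include the short induction for self-containedness. The base case $k=1$ is immediate, since $f_{d,c_0}(x) = x^{d} + c_0$ has the required form with $G_1 = 0$ and $a_1(c_0) = c_0$.

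For the inductive step I would set $A = x^{d^k}$, $B = d x^d G_k(x)$, and $C = a_k(c_0)$, and expand $f_{d,c_0}^{k+1}(x) = (A+B+C)^d + c_0$ by the multinomial theorem. The pure term $A^d = x^{d^{k+1}}$ is the new leading term; setting $x = 0$ annihilates every summand except $C^d$, so the constant term is $a_k(c_0)^d + c_0 = a_{k+1}(c_0)$, as needed. The mildly delicate point — and essentially the only obstacle — is to check that every remaining term $\tfrac{d!}{i!\,j!\,\ell!}\, A^i B^j C^\ell$ with $i+j+\ell = d$ is \emph{simultaneously} divisible by $d$ and of $x$-degree at least $d$, so that all such terms collect into a single expression $d x^d G_{k+1}(x)$ with $G_{k+1} \in \mathcal{O}_K[x]$. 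For divisibility, since $d$ is prime the multinomial coefficient $\tfrac{d!}{i!\,j!\,\ell!}$ is divisible by $d$ unless all the weight sits on one index; the exceptional triples $(d,0,0)$ and $(0,0,d)$ are precisely the leading and constant terms already handled, while the remaining exceptional triple $(0,d,0)$ gives $B^d = d^d x^{d^2} G_k(x)^d$, which is divisible by $d$ for an independent reason. For the degree bound, $A^i B^j C^\ell$ is divisible by $x^{i d^k + j d}$, so any term with $i \geq 1$ has $x$-degree at least $d^k \geq d$, and any term with $i = 0,\, j \geq 1$ has $x$-degree at least $d$; the only term with $i = j = 0$ is the constant $C^d$. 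Thus all non-leading, non-constant terms are divisible by $d$ and by $x^d$, completing the induction.

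Finally I would invoke the Gleason hypothesis. Since $c_0$ is a root of $G_{d,n}$ it is a Gleason parameter of period $n$, so $a_n(c_0) = f_{d,c_0}^n(0) = 0$; hence $a_{n+1}(c_0) = a_n(c_0)^d + c_0 = c_0 = a_1(c_0)$, and by induction the sequence $\big(a_j(c_0)\big)_{j \geq 1}$ is periodic with period $n$. Therefore $a_k(c_0) = a_i(c_0)$, where $i \in \{1,\dots,n\}$ is the smallest positive integer with $k \equiv i \pmod{n}$, which gives the first displayed identity with $F = G_k$. The \emph{in particular} statement is the special case $n \mid k$: then $i = n$ and $a_n(c_0) = 0$, so the constant term vanishes and $f_{d,c_0}^k(x) = x^{d^k} + d x^d F(x)$. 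All the substance lies in the structural bookkeeping of the second paragraph; the passage to the periodic constant term is then just an application of Gleason periodicity.
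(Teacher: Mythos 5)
Your proof is correct and takes essentially the same route as the paper: the paper likewise obtains the structural form $f_{d,c_0}^k(x) = x^{d^k} + d\,x^d F(x) + a_k(c_0)$ by direct expansion using divisibility of the binomial (multinomial) coefficients by the prime $d$, and then replaces $a_k(c_0)$ by $a_i(c_0)$ via periodicity of $\big(a_j(c_0)\big)_j$ at a Gleason parameter. Your inductive multinomial bookkeeping simply makes explicit what the paper compresses into ``by direct expansion.''
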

	
	\begin{proof}
		Similar to the proof of Lemma~\ref{lem:expansion_preperiodic}, by direct expansion we have
		\[
		f_{d,c_0}^k(x) = x^{d^k} + d x^d F(x) + a_k(c_0)
		\]
		for some $F(x) \in \mathcal{O}_K[x]$. Since $c_0$ is a root of the Gleason polynomial $G_{d,n}$, the sequence $(a_j(c_0))_j$ is periodic with period $n$, and so
		\[
		a_{nq + r}(c_0) = a_r(c_0)
		\]
		for all integers $q \geq 0$ and $0 \leq r \leq n-1$. Thus, the first part of the statement follows by taking $i$ as the minimal positive residue of $k$ modulo $n$. In particular, when $k$ is divisible by $n$, we have $a_k(c_0) = 0$ by periodicity, which proves the second part.
	\end{proof}
	The next lemma shows that the extensions of \(\mathbb{Q}\) generated by the roots of $G_{d,n}$ never contain any irrational $d$-th roots of unity.
	
	\begin{lemma}
		\label{lem:trivial_cyclo}
		Let \(d\) be a prime and \(k \geq 1\). Let \(c_0\) be a root of \(G_{d,n}\) for some $n\geq 1$. Set \(K = \mathbb{Q}(c_0)\). Then, for any primitive \(d\)-th root of unity \(\zeta\), we have
		\[
		K \cap \mathbb{Q}(\zeta) = \mathbb{Q}.
		\]
	\end{lemma}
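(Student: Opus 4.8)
The plan is to prove the stronger statement that the rational prime $d$ is \emph{unramified} in $K=\QQ(c_0)$, and then to exploit the fact that $d$ is totally ramified in $\QQ(\zeta)$. First I would set up the reduction: since $\zeta$ is a primitive $d$-th root of unity and $d$ is prime, $\QQ(\zeta)/\QQ$ is cyclic of degree $d-1$ and is totally ramified at $d$, so every field $L$ with $\QQ\subsetneq L\subseteq\QQ(\zeta)$ is itself ramified at $d$ (intermediate fields of a totally ramified extension are totally ramified). Hence, once $d$ is known to be unramified in $K$, the subfield $L:=K\cap\QQ(\zeta)$ is simultaneously unramified at $d$ (being contained in $K$) and, unless $L=\QQ$, ramified at $d$; this forces $L=\QQ$, which is exactly the asserted equality.

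The heart of the argument is therefore to show $d$ is unramified in $K$, and the key is a derivative identity for the integer polynomials $a_k(c)\in\ZZ[c]$. Differentiating the recursion $a_{i+1}=a_i^d+c$ with respect to $c$ gives $a_{i+1}'(c)=d\,a_i(c)^{d-1}a_i'(c)+1$. Reducing coefficients modulo $d$ annihilates the first term, so by induction (with base case $a_1'=1$) one obtains $a_k'(c)\equiv 1\pmod d$ for every $k\geq 1$. In particular the reduction $\overline{a_n}\in\FF_d[c]$ has derivative identically equal to $1$, whence $\gcd(\overline{a_n},\overline{a_n}')=1$ and $\overline{a_n}$ is separable over $\FF_d$.

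To transfer this separability to $c_0$, I would apply M\"obius inversion to the defining product \eqref{eq:Gleasondef}: from $G_{d,k}=\prod_{j\mid k}a_j^{\mu(k/j)}$ one recovers the factorization $a_n=\prod_{k\mid n}G_{d,k}$ in $\ZZ[c]$, with all factors monic. Reducing modulo $d$ and using separability of $\overline{a_n}$ shows that each $\overline{G_{d,k}}$, and in particular $\overline{G_{d,n}}$, is separable. Since the minimal polynomial $h$ of $c_0$ divides $G_{d,n}$ in $\ZZ[c]$ by Gauss's lemma, $\overline h$ is separable as well, so $d\nmid\Disc(h)$. As $\Disc(K)\mid\Disc(h)$, this yields $d\nmid\Disc(K)$, i.e., $d$ is unramified in $K$; combined with the first paragraph, the proof is complete.

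I do not expect any single step to present a serious difficulty once the identity $a_k'(c)\equiv 1\pmod d$ is found. The only points requiring care are the bookkeeping that passes from separability of $\overline{a_n}$ to separability of $\overline{G_{d,n}}$ and then to the reduced \emph{minimal polynomial} of $c_0$ (crucially, we neither need nor assume that $G_{d,n}$ is irreducible, so one must work with the factor $h$), together with the standard fact that separability of $\overline h$ forces $d\nmid\Disc(K)$ — valid here precisely because $h$ is monic, so reduction modulo $d$ preserves the degree and commutes with formation of the discriminant.
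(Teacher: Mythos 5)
Your proof is correct and follows essentially the same route as the paper: establish that $d$ is unramified in $K$ via separability of the (reduced) minimal polynomial of $c_0$ modulo $d$, then play this against the total ramification of $d$ in every nontrivial subfield of $\QQ(\zeta)$ to force $K \cap \QQ(\zeta) = \QQ$. The only difference is that where the paper simply cites \cite[Lemma 3]{Buff18} for the fact that $G_{d,n}$ has simple roots modulo $d$, you re-derive it from scratch via the identity $a_k'(c) \equiv 1 \pmod{d}$ and M\"obius inversion applied to \eqref{eq:Gleasondef} — a correct, self-contained treatment of the cited ingredient, handled carefully (monic factors, passage to the minimal polynomial $h \mid G_{d,n}$, and $\Disc(K) \mid \Disc(h)$).
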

	
	\begin{proof}
		Note that \(d\) is totally ramified in the extension \(\mathbb{Q}(\zeta)/\mathbb{Q}\), hence \(d\) must be totally ramified in any nontrivial subextension as well. Let \(g \in \mathbb{Z}[x]\) be the minimal polynomial of \(c_0\). Since \(g\) divides \(G_{d,n}\), which is known to have only simple roots modulo \(d\) (see, for instance, \cite[Lemma 3]{Buff18}), it follows that \(g\) also has simple roots modulo \(d\). Hence, \(d \nmid \operatorname{disc}(g)\), implying \(d \nmid \operatorname{Disc}(K)\). Thus, \(d\) is unramified in \(K\), which implies that \(d\) is unramified in the intersection \(K \cap \mathbb{Q}(\zeta)\) as well. Since \(d\) is totally ramified in any nontrivial subextension of \(\mathbb{Q}(\zeta)/\mathbb{Q}\), this forces the intersection \(K \cap \mathbb{Q}(\zeta)\) to be trivial, as desired.
	\end{proof}
	
	We now establish two facts about the polynomials \(F_{k,i}^{(d,c)}\) defined in Definition~\ref{def:F_{k,i}}, which are essential for the proof of Theorem~\ref{thm:periodic}.
	
	\begin{lemma}
		\label{lemma:gcd-F_{k,i}}
		Let $d$ be a prime. Let \(c_0\) be a root of \(G_{d,n}\) for some $n\geq 2$. Set \(K = \mathbb{Q}(c_0)\). Then the following hold:
		\begin{enumerate}[label=(\alph*)]
			\item \(F_{k,i}^{(d,c_0)}\) is defined over \(K\) for any \(k \geq 0\), \(1\leq i \leq n-1\).
			\item Let \(k_1, k_2 \geq 0\) and \(1 \leq i_1, i_2 \leq n - 1\). Then \(\gcd(F_{k_1,i_1}^{(d,c_0)}, F_{k_2,i_2}^{(d,c_0)}) > 1\) if and only if \((k_1, i_1) = (k_2, i_2)\).
		\end{enumerate}
	\end{lemma}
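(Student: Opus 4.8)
The plan is to prove the two parts separately, the decisive inputs being Lemma~\ref{lem:trivial_cyclo} and the observation that, since $c_0$ is a Gleason parameter of period $n$, the sequence $(a_j(c_0))_{j\ge 0}$ is $n$-periodic with $a_0(c_0)=0,a_1(c_0),\dots,a_{n-1}(c_0)$ pairwise distinct; in particular $a_s(c_0)=a_t(c_0)$ forces $s\equiv t\pmod n$, and $a_i(c_0)\ne 0$ for $1\le i\le n-1$. For part (a) I would argue by Galois descent. Each factor $f_{d,c_0}^{k}-\zeta^{j}a_i(c_0)$ has coefficients in $K(\zeta)$, so a priori $F_{k,i}^{(d,c_0)}\in K(\zeta)[x]$. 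By Lemma~\ref{lem:trivial_cyclo} we have $K\cap\QQ(\zeta)=\QQ$, so restriction yields an isomorphism $\Gal(K(\zeta)/K)\xrightarrow{\sim}\Gal(\QQ(\zeta)/\QQ)\cong(\ZZ/d\ZZ)^{\times}$. Such a $\sigma$ acts by $\zeta\mapsto\zeta^{t}$ with $\gcd(t,d)=1$ and fixes both $f_{d,c_0}^{k}\in K[x]$ and $a_i(c_0)\in K$; hence it carries the factor indexed by $j$ to the one indexed by $tj\bmod d$. Since $j\mapsto tj$ permutes $\{1,\dots,d-1\}$, the map $\sigma$ merely permutes the factors and fixes their product, so $F_{k,i}^{(d,c_0)}$ is $\Gal(K(\zeta)/K)$-invariant and therefore lies in $K[x]$.

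For part (b), the implication $(k_1,i_1)=(k_2,i_2)\Rightarrow\gcd>1$ is immediate, since the two polynomials then coincide and are non-constant (of degree $(d-1)d^{k_1}\ge 1$). For the converse I would take a common root $\theta\in\Kbar$; by Definition~\ref{def:F_{k,i}} there are $j_1,j_2\in\{1,\dots,d-1\}$ with $f_{d,c_0}^{k_1}(\theta)=\zeta^{j_1}a_{i_1}(c_0)$ and $f_{d,c_0}^{k_2}(\theta)=\zeta^{j_2}a_{i_2}(c_0)$. Assuming $k_1\le k_2$ and writing $w=k_2-k_1$, the crucial observation is that a single application of $f_{d,c_0}$ annihilates the root of unity: $f_{d,c_0}(\zeta^{j_1}a_{i_1}(c_0))=(\zeta^{j_1})^{d}a_{i_1}(c_0)^{d}+c_0=a_{i_1+1}(c_0)$, so that $f_{d,c_0}^{w}(\zeta^{j_1}a_{i_1}(c_0))=a_{i_1+w}(c_0)$ for every $w\ge 1$.

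It then remains to rule out $w\ge 1$ and to conclude when $w=0$. If $w\ge 1$, applying $f_{d,c_0}^{w}$ gives $a_{i_1+w}(c_0)=\zeta^{j_2}a_{i_2}(c_0)$; raising to the $d$-th power eliminates $\zeta^{j_2}$ and adding $c_0$ yields $a_{i_1+w+1}(c_0)=a_{i_2+1}(c_0)$, whence $i_1+w\equiv i_2\pmod n$ by distinctness of the cycle, so $a_{i_1+w}(c_0)=a_{i_2}(c_0)$. Substituting back leaves $a_{i_2}(c_0)=\zeta^{j_2}a_{i_2}(c_0)$; since $a_{i_2}(c_0)\ne 0$, this forces $\zeta^{j_2}=1$, contradicting $j_2\ne 0$. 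Hence $w=0$ and $k_1=k_2$, and then $\zeta^{j_1}a_{i_1}(c_0)=\zeta^{j_2}a_{i_2}(c_0)$; raising to the $d$-th power and adding $c_0$ gives $a_{i_1+1}(c_0)=a_{i_2+1}(c_0)$, so $i_1=i_2$. I expect the main subtlety to be exactly this control of the root of unity: a naive comparison could suggest spurious common roots, and the uniform device of raising the relevant identity to the $d$-th power — which trivializes $\zeta^{j}$ while the recursion $a_{m+1}=a_m^{d}+c_0$ transports the resulting equality along the cycle — is what makes the argument go through for every prime $d$, including $d=2$, where $-1\in\QQ$ renders Lemma~\ref{lem:trivial_cyclo} vacuous.
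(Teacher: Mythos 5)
Your proof is correct and follows essentially the same route as the paper's: part (a) is the same Galois-descent observation (the paper simply notes that any $\sigma\in\Gal(\overline{K}/K)$ permutes the set $\{\zeta,\dots,\zeta^{d-1}\}$, so your detour through Lemma~\ref{lem:trivial_cyclo} is harmless but unnecessary), and part (b) uses the paper's same key device of applying $f_{d,c_0}$ to trivialize the root of unity via $\zeta^{d}=1$, then exploiting the exact $n$-periodicity of the orbit of $0$ and $a_{i}(c_0)\neq 0$ for $1\leq i\leq n-1$ to force $\zeta^{j}=1$, a contradiction. The only cosmetic difference is that the paper establishes the slightly stronger pairwise coprimality of the individual factors $G_{k,i,j}^{(d,c_0)}=f_{d,c_0}^{k}-\zeta^{j}a_{i}(c_0)$ (also pinning down $j_1=j_2$), whereas you conclude directly at the level of $F_{k,i}^{(d,c_0)}$, which suffices for the statement.
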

	
	\begin{proof}
		
		\textbf{Part (a).} Let $S=\{\zeta, \zeta^2,\dots, \zeta^{d-1}\}$. Note that $\sigma(S)=S$ for any $\sigma\in \Gal(\overline{K}/K)$. Since $f_{d,c_0}^k$ is defined over $K$ and $a_i(c_0)\in K$, it follows that any $\sigma\in \Gal(\overline{K}/K)$ leaves $F_{k,i}^{(d,c_0)}$ fixed, which implies that $F_{k,i}^{(d,c_0)}$ is defined over $K$.
		
		\textbf{Part (b).} Define $G_{k,i,j}^{(d,c_0)}=f_{d,c_0}^k-\zeta^ja_i(c_0)\in K(\zeta)[x]$ for $1\leq j\leq d-1$. We will prove that $\gcd(G_{k_1,i_1,j_1}^{(d,c_0)}, G_{k_2,i_2,j_2}^{(d,c_0)})>1$ if and only if $(k_1,i_1,j_1)=(k_2,i_2,j_2)$, which will automatically imply the result. First, suppose that $\alpha\in \overline{K}$ is a common root for $G_{k_1,i_1,j_1}^{(d,c_0)}$ and $G_{k_2,i_2,j_2}^{(d,c_0)}$. Thus, we have
		\begin{equation}
			\label{eq:i_1,j_1,i_2,j_2,alpha}
			f_{d,c_0}^{k_1}(\alpha) = \zeta^{j_1}a_{i_1}(c_0),\text{ }f_{d,c_0}^{k_2}(\alpha) = \zeta^{j_2}a_{i_2}(c_0).
		\end{equation}
		Assume without loss of generality $k_1\geq k_2$. For any $N\geq 1$, iterating each equation by $f_{d,c_0}^N$, we obtain
		\begin{equation}
			\label{eq:iterate by f^N}
			f_{d,c_0}^{k_1+N}(\alpha) = a_{i_1+N}(c_0),\text{ }f_{d,c_0}^{k_2+N}(\alpha) = a_{i_2+N}(c_0).
		\end{equation}
		Using Eq.~\ref{eq:iterate by f^N}, we get
		\[f_{d,c_0}^{k_1+N}(\alpha) = f_{d,c_0}^{k_1-k_2}(f_{d,c_0}^{k_2+N}(\alpha))=f_{d,c_0}^{k_1-k_2}(a_{i_2+N}(c_0)) = a_{k_1-k_2+i_2+N}(c_0).\]
		Combining this with the first part of Eq.~\ref{eq:iterate by f^N}, then, we conclude
		\begin{equation}
			\label{eq:k_1-k_2=i_1-i_2}
			k_1-k_2\equiv i_1-i_2 (\text{mod }n).
		\end{equation}
		Now, if $k_1>k_2$, apply $f_{d,c_0}^{k_1-k_2}$ to both sides of the equation $f_{d,c_0}^{k_2}(\alpha)=\zeta^{j_2}a_{i_2}(c_0)$ to get
		\[f_{d,c_0}^{k_1}(\alpha) = a_{i_2+k_1-k_2}(c_0)= a_{i_1}(c_0),\]
		where we used Eq.~\ref{eq:k_1-k_2=i_1-i_2} in the last equality. This contradicts the equation $f_{d,c_0}^{k_1}(\alpha)=\zeta^{j_1}a_{i_1}(c_0)$ since $\zeta\neq 1$. We conclude $k_1=k_2$. But, using this in Eq.~\ref{eq:k_1-k_2=i_1-i_2}, we also have $i_1 = i_2$. Combining this with Eq.~\ref{eq:i_1,j_1,i_2,j_2,alpha} we also get $j_1=j_2$, finishing the proof of Lemma~\ref{lemma:gcd-F_{k,i}}. 
	\end{proof}
	The following is a simple fact from arithmetic dynamics. Since we have not found a published proof, we include one here for the convenience of the reader.
	
	\begin{lemma}
		\label{lemma:irreducible-infinite-iterate}
		Let $K$ be a field, and let $f\in K[x]$ be a non-constant polynomial. Let $\alpha\in K$. If $f^i-\alpha$ is irreducible over $K$ for infinitely many positive integers $i$, then $f^i-\alpha$ is irreducible over $K$ for all $i\geq 1$.
	\end{lemma}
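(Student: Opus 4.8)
The plan is to track $N(i)$, the number of \emph{distinct} monic irreducible factors of $f^i-\alpha$ over $K$, and to prove that this is a non-decreasing function of $i$ that is always at least $1$. Granting this, the hypothesis says $N(i)=1$ for infinitely many $i$; a non-decreasing integer sequence that is bounded below by $1$ and equals $1$ infinitely often must be identically $1$ (once it strictly exceeds $1$ it can never return to $1$). Hence $N(i)=1$ for every $i\geq 1$, which is exactly the assertion that $f^i-\alpha$ is irreducible over $K$ for all $i\geq 1$.

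To prove monotonicity, I would fix $m\geq 1$ and $j\geq 0$ and exploit the substitution identity $f^{m+j}(x)-\alpha = P\!\left(f^j(x)\right)$, where $P(y):=f^m(y)-\alpha$. Writing the factorization $P(y)=c\prod_{k=1}^r p_k(y)^{e_k}$ into distinct monic irreducibles $p_1,\dots,p_r$ over $K$ (so $r=N(m)$, $c\in K^{\times}$, $e_k\geq 1$) and substituting $f^j(x)$ for $y$ gives
\[
f^{m+j}(x)-\alpha \;=\; c\prod_{k=1}^r p_k\!\left(f^j(x)\right)^{e_k}.
\]
Each factor $p_k\!\left(f^j(x)\right)$ is non-constant, since $f$ (hence $f^j$) is non-constant and $\deg p_k\geq 1$, so each contributes at least one irreducible factor over $K$.

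The crux is that the factors $p_k\!\left(f^j(x)\right)$ are pairwise coprime for distinct $k$. Indeed, if a monic irreducible $q\in K[x]$ divided both $p_k\!\left(f^j(x)\right)$ and $p_l\!\left(f^j(x)\right)$ with $k\neq l$, then any root $\beta\in\overline{K}$ of $q$ would satisfy $p_k\!\left(f^j(\beta)\right)=p_l\!\left(f^j(\beta)\right)=0$, making $f^j(\beta)$ a common root of the distinct monic irreducibles $p_k$ and $p_l$; but distinct monic irreducibles in $K[x]$ are the minimal polynomials of their respective roots and hence share no common root, a contradiction. Therefore the irreducible factors contributed by the different $p_k\!\left(f^j(x)\right)$ are distinct, and
\[
N(m+j)\;\geq\;\sum_{k=1}^r \#\{\text{distinct irreducible factors of }p_k(f^j(x))\}\;\geq\;r\;=\;N(m),
\]
which establishes that $N$ is non-decreasing.

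I expect the coprimality step to be the only genuine obstacle, and the point to emphasize is that it uses only that distinct monic irreducibles are coprime, so no separability hypothesis on $f^i-\alpha$ is required (we count factors without multiplicity). As a degenerate remark, when $\deg f=1$ every $f^i-\alpha$ is linear and hence irreducible, so the statement is trivial there; the substance lies in the case $\deg f\geq 2$, where the monotonicity of $N$ does all the work.
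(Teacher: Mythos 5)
Your proof is correct, and it rests on the same composition identity that the paper's proof uses, namely $f^{m+j}-\alpha=(f^m-\alpha)\circ f^{j}$; the difference is that you route the argument through a strictly stronger statement than the lemma requires. The paper's proof is a two-line contrapositive: given $k$, choose $i\geq k$ with $f^{i}-\alpha$ irreducible, write $f^{i}-\alpha=(f^{k}-\alpha)\circ f^{i-k}$, and note that a nontrivial factorization $f^{k}-\alpha=gh$ would yield $f^{i}-\alpha=g(f^{i-k})\,h(f^{i-k})$ with both factors non-constant, a contradiction (equivalently, this is the easy direction of Capelli's Lemma~\ref{lemma:Capelli}, which the paper quotes anyway). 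That argument needs no coprimality considerations at all --- only that composing non-constant polynomials gives a non-constant polynomial. You instead prove that the number $N(i)$ of distinct monic irreducible factors of $f^{i}-\alpha$ is non-decreasing in $i$, for which the pairwise coprimality of the substituted factors $p_k(f^{j}(x))$ is genuinely needed; your justification of it --- distinct monic irreducibles over $K$ are coprime in $K[x]$ and hence share no root in $\overline{K}$, so a common root $\beta$ of two substituted factors would force $f^{j}(\beta)$ to be such a shared root --- is sound, and you are right that counting factors without multiplicity sidesteps any separability hypothesis. What the extra work buys is a reusable general fact: monotonicity of the factor count is precisely the mechanism underlying eventual stability in the sense of Definition~\ref{def:stability} (eventual stability is the statement that $N$ stabilizes), whereas the lemma itself is just the degenerate case in which $N$ is pinned at $1$ infinitely often and hence everywhere. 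So: same decomposition, but a more general monotonicity argument where the paper opts for the minimal special case.
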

	\begin{proof}
		Let $k$ be an arbitrary positive integer. Choose a positive integer $i\geq k$ such that $f^i-\alpha$ is irreducible over $K$. Since we have
		\[f^i-\alpha = (f^k-\alpha)\circ f^{i-k},\]
		this forces $f^k-\alpha$ to be irreducible over $K$ as well, as desired.
	\end{proof}
	We now quote Capelli's lemma, a standard result used to prove the irreducibility of compositions of polynomials. The corollary that follows is a key tool in proving that the polynomials \(F_{k,i}^{(d,c_0)}\) are irreducible over \(K := \mathbb{Q}(c_0)\) for any Gleason parameter \(c_0\). This fact will be used to describe the complete factorization of iterates of \(f_{d,c_0}\).
	\begin{lemma}[Capelli's Lemma]
		\label{lemma:Capelli}
		Let $K$ be a field, $f(x), g(x)\in K[x]$, and let $\beta\in \overline{K}$ be any root of $g(x)$. Then $g(f(x))$ is irreducible over $K$ if and only if both $g$ is irreducible over $K$ and $f(x)-\beta$ is irreducible over $K(\beta)$.
	\end{lemma}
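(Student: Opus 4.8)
The plan is to deduce both implications simultaneously from the multiplicativity of degrees in a tower of field extensions. The engine is the elementary observation that, for any field $L$ and any nonzero $h \in L[x]$ having a root $\theta \in \overline{L}$, one has $[L(\theta):L] \leq \deg h$, with equality if and only if $h$ is irreducible over $L$. This holds because the minimal polynomial of $\theta$ over $L$ divides $h$ and has degree exactly $[L(\theta):L]$, so $h$ equals a nonzero scalar times the minimal polynomial precisely when the degrees match.

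First I would fix a convenient root of $g(f(x))$. Since $\overline{K}$ is algebraically closed and $f$ is non-constant, there exists $\theta \in \overline{K}$ with $f(\theta) = \beta$; then $g(f(\theta)) = g(\beta) = 0$, so $\theta$ is a root of $g(f(x))$. Moreover $\beta = f(\theta) \in K(\theta)$, so $K(\theta) = K(\beta)(\theta)$, and $\theta$ is a root of $f(x) - \beta \in K(\beta)[x]$. This single element $\theta$ will witness all three degree computations at once.

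Next I would apply the basic observation three times. Taking $L = K$ and $h = g$ gives $[K(\beta):K] \leq \deg g$, with equality iff $g$ is irreducible over $K$. Taking $L = K(\beta)$ and $h = f(x) - \beta$ gives $[K(\theta):K(\beta)] \leq \deg f$, with equality iff $f(x) - \beta$ is irreducible over $K(\beta)$. Taking $L = K$ and $h = g(f(x))$, a polynomial of degree $\deg g \cdot \deg f$, gives $[K(\theta):K] \leq \deg g \cdot \deg f$, with equality iff $g(f(x))$ is irreducible over $K$. The tower law then reads
\[
[K(\theta):K] = [K(\theta):K(\beta)] \cdot [K(\beta):K].
\]
Since the two factors on the right are bounded by $\deg f$ and $\deg g$ respectively, their product attains the maximal value $\deg g \cdot \deg f$ if and only if each factor attains its own maximum. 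Translating each equality back through the irreducibility criterion yields exactly the claimed equivalence.

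There is essentially no hard step here; the only points requiring a moment's care are the existence of a root $\theta$ with $f(\theta) = \beta$ (which is where algebraic closedness of $\overline{K}$ enters) and the identity $K(\theta) = K(\beta)(\theta)$, which is what allows $\theta$ to serve in all three degree estimates. I would also note that irreducibility is insensitive to nonzero scalar factors, so the criterion applies regardless of whether the polynomials are normalized to be monic.
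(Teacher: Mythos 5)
Your proof is correct. Note that the paper itself offers no proof to compare against: it quotes Capelli's Lemma as a standard result, and your argument is precisely the classical one --- fix $\theta \in \overline{K}$ with $f(\theta) = \beta$, apply the observation that $[L(\theta):L] \leq \deg h$ with equality exactly when $h$ is irreducible (since the minimal polynomial of $\theta$ divides $h$) three times, and conclude from the tower law $[K(\theta):K] = [K(\theta):K(\beta)]\,[K(\beta):K]$ that the top degree is maximal if and only if both factors are. The only point worth flagging is that the lemma as stated places no degree hypothesis on $f$, while your existence of $\theta$ and the identity $\deg\bigl(g(f(x))\bigr) = \deg g \cdot \deg f$ both require $f$ non-constant; this is harmless here, since every application in the paper takes $f$ to be an iterate of $f_{d,c_0}$, of degree at least $2$, but the implicit hypothesis should be stated.
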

	\begin{cor}
		\label{cor:capelli}
		Let \(K\) be a field, and let \(f, u \in K[x]\) be polynomials over \(K\). Let \(L\) be a Galois extension of \(K\). For some \(\alpha \in K\), let \(\mathcal{O}(\alpha) = \{\alpha_1, \alpha_2, \dots, \alpha_k\}\) be the \(\operatorname{Gal}(L/K)\)-orbit of \(\alpha\). Suppose that \(f - u(\alpha_i)\) is irreducible over \(K(\alpha_i)\) for some \(1 \leq i \leq k\). Then the polynomial
		\[
		h := \prod_{i=1}^k (f - u(\alpha_i)) \in K[x]
		\]
		is irreducible.
	\end{cor}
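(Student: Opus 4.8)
The plan is to recognize $h$ as a composition and then apply Capelli's Lemma directly. I would set $g(y) := \prod_{i=1}^{k}\bigl(y - u(\alpha_i)\bigr)$. Since $u \in K[x]$ and $\{\alpha_1,\dots,\alpha_k\}$ is a single $\Gal(L/K)$-orbit, every $\sigma \in \Gal(L/K)$ permutes the elements $u(\alpha_i)$ and hence fixes $g$; therefore $g \in K[y]$. The crucial observation is the factorization
\[
h(x) = \prod_{i=1}^{k}\bigl(f(x) - u(\alpha_i)\bigr) = g\bigl(f(x)\bigr),
\]
so that $h$ is literally the composition $g \circ f$, which is exactly the shape to which Lemma~\ref{lemma:Capelli} applies.

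Next I would verify the two hypotheses of Lemma~\ref{lemma:Capelli}. For the first, I claim $g$ is irreducible over $K$: its roots $u(\alpha_1),\dots,u(\alpha_k)$ are the images under $u$ of the single Galois orbit $\{\alpha_i\}$, and since $\sigma(u(\alpha_i)) = u(\sigma(\alpha_i))$ for every $\sigma \in \Gal(L/K)$, they again form a single Galois orbit; assuming these values are distinct (see below), $g$ then coincides with the minimal polynomial of $u(\alpha_1)$ over $K$ and is therefore irreducible. For the second, choose the index $i_0$ supplied by the hypothesis and set $\beta := u(\alpha_{i_0})$, which is a root of $g$. Because $u \in K[x]$ we have $K(\beta) = K(u(\alpha_{i_0})) \subseteq K(\alpha_{i_0})$, so the irreducibility of $f - \beta = f - u(\alpha_{i_0})$ over the larger field $K(\alpha_{i_0})$ — which is precisely the hypothesis — implies a fortiori its irreducibility over the subfield $K(\beta)$. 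With both hypotheses in hand, Lemma~\ref{lemma:Capelli} yields that $g\bigl(f(x)\bigr) = h(x)$ is irreducible over $K$.

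The one point requiring care — and the step I expect to be the main obstacle — is the irreducibility of $g$, which holds precisely when the values $u(\alpha_1),\dots,u(\alpha_k)$ are pairwise distinct. If they were not, then by transitivity of the Galois action $g$ would be a proper power of the minimal polynomial of $u(\alpha_1)$, and correspondingly $h = g\circ f$ would be a proper power of a non-constant polynomial, hence reducible; so distinctness is genuinely needed. In the intended application this is automatic, since there $u(\alpha_i) = \zeta^j a_i(c_0)$ with $\zeta$ a primitive $d$-th root of unity and $a_i(c_0)\neq 0$, making the values pairwise distinct. I would therefore either record distinctness among the running hypotheses or verify it in situ before running the argument above; everything else — the Galois-invariance of $g$, the composition identity, and the orientation of the field inclusion in the a fortiori step — is routine.
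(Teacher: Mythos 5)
Your proof is correct and takes essentially the same route as the paper's: recognize $h = g \circ f$ where $g$ is the minimal polynomial of $u(\alpha)$ over $K$, then apply Capelli's Lemma, with the inclusion $K(u(\alpha_{i_0})) \subseteq K(\alpha_{i_0})$ letting the hypothesized irreducibility over $K(\alpha_{i_0})$ descend to $K(\beta)$ as the lemma requires. Your one departure is actually a sharpening: you correctly note that the identity $h = g \circ f$, and hence the corollary as stated, tacitly requires the values $u(\alpha_1), \dots, u(\alpha_k)$ to be pairwise distinct (otherwise $h$ is a proper power of $g \circ f$, hence reducible), a point the paper's proof passes over silently; it is harmless in the paper's application, where $u = a_j(c_0)x$ with $a_j(c_0)$ a unit and the $\alpha_i$ ranging over the distinct primitive $d$-th roots of unity, but recording distinctness as a hypothesis, as you suggest, would make the statement literally correct.
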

	
	\begin{proof}
		Note that \(h\) is defined over \(K\) because \(f\) and \(u\) are defined over \(K\), and any element of \(\operatorname{Gal}(L/K)\) permutes the values \(u(\alpha_i)\). Let \(g\) be the minimal polynomial of \(u(\alpha)\) over \(K\). Since the \(\operatorname{Gal}(L/K)\)-orbit of \(u(\alpha)\) is \(\{u(\alpha_1), u(\alpha_2), \dots, u(\alpha_k)\}\), we have \(h = g \circ f\). The result now immediately follows from Lemma~\ref{lemma:Capelli}.
	\end{proof}
	\section{Proofs of main results}
	\label{section:3}
	We start this section by giving a factorization of the iterates \(f_{d,c_0}^k\) when \(c_0\) is a Gleason parameter. For $n \geq 0$, $\alpha \in K$ and $f\in K[x]$, define the set
	\[
	R_{n,\alpha}(f) = \{\beta \in \overline{K} \mid f^n(\beta) = \alpha \}.
	\]
	\begin{prop}
		\label{prop:factorization-f_{d,c}^k}
		Let $d$ be a prime. Let \(\zeta \neq 1\) be a \(d\)-th root of unity. Let \(c_0\) be a root of \(G_{d,n}\) for some $n\geq 2$. Let \(k\) be a positive integer, and write \(k = nq + r\) for some integers \(q \geq 0\) and \(0 \leq r < n\). Then the following holds:
		\begin{equation}
			\label{eq:factorization of f_{d,c}^k}
			f_{d,c_0}^k = \left(\prod_{j=0}^{q-1} \prod_{i=1}^{n-1} \left(F_{k - nj - i,\, n - i}^{(d,c_0)}\right)^{d^j}\right)
			\left( (x - a_{n-r}(c_0)) \prod_{i=1}^r F_{r - i,\, n - i}^{(d,c_0)} \right)^{d^q}.
		\end{equation}
		Here, we understand an empty product to be \(1\).
	\end{prop}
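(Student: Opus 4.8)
The plan is to induct on $k$, peeling off a single application of $f_{d,c_0}$ at each step through the identity $f_{d,c_0}^{k+1}(x) = f_{d,c_0}^{k}(f_{d,c_0}(x))$ and tracking how each factor on the right-hand side of \eqref{eq:factorization of f_{d,c}^k} is transformed. The base case $k = 0$ is immediate: then $q = r = 0$, the outer product is empty, and the inner block collapses to $x - a_n(c_0) = x$ since $a_n(c_0) = 0$ by the Gleason condition, which matches $f_{d,c_0}^0 = x$.

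Two elementary facts drive the induction. First, directly from Definition~\ref{def:F_{k,i}} together with $f_{d,c_0}^{a}\circ f_{d,c_0} = f_{d,c_0}^{a+1}$, one obtains the shift relation
\[
F_{a,b}^{(d,c_0)}\big(f_{d,c_0}(x)\big) = \prod_{j=1}^{d-1}\big(f_{d,c_0}^{a+1}(x) - \zeta^j a_b(c_0)\big) = F_{a+1,b}^{(d,c_0)}(x)
\]
for every $a \geq 0$. Second, since $a_{n-r-1}(c_0)^d = a_{n-r}(c_0) - c_0$, the backbone factor satisfies $f_{d,c_0}(x) - a_{n-r}(c_0) = x^d - a_{n-r-1}(c_0)^d$. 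When $r \leq n-2$ we have $1 \leq n-r-1 \leq n-1$, so $a_{n-r-1}(c_0) \neq 0$ because $c_0$ is a Gleason parameter of period $n$; hence this polynomial has the $d$ distinct roots $\zeta^{\ell} a_{n-r-1}(c_0)$ and factors as $(x - a_{n-r-1}(c_0))\,F_{0,n-r-1}^{(d,c_0)}(x)$. When $r = n-1$ we instead have $n - r = 1$, so $f_{d,c_0}(x) - a_1(c_0) = x^d$; this is the critical collapse responsible for raising the power of the spine.

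The inductive step then splits according to how $k = nq + r$ advances to $k+1$. If $r \leq n-2$, then $k+1 = nq + (r+1)$ with $q$ unchanged: substituting $f_{d,c_0}$ into the outer double product and applying the shift relation reindexes it to the outer product attached to $k+1$, while the inner block $\big((x - a_{n-r})\prod_{i=1}^{r} F_{r-i,n-i}\big)^{d^q}$ becomes $\big((x - a_{n-r-1})F_{0,n-r-1}\prod_{i=1}^{r} F_{r+1-i,n-i}\big)^{d^q}$, which is precisely the inner block for $k+1$ once $F_{0,n-r-1}^{(d,c_0)}$ is read off as the $i = r+1$ term. If instead $r = n-1$, then $k+1 = n(q+1)$, so $r$ resets to $0$ and $q$ increases by $1$; here the backbone collapses to $x^{d^{q+1}}$, matching the new inner block $(x - a_n(c_0))^{d^{q+1}} = x^{d^{q+1}}$, and the surviving product $\big(\prod_{i=1}^{n-1} F_{n-i,n-i}\big)^{d^q}$ is exactly the new $j = q$ layer of the outer double product. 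In both cases the two sides agree, closing the induction.

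The index bookkeeping in the shift relation is routine, and the one place that genuinely needs care --- the \textbf{main obstacle} --- is the case $r = n-1$, where the critical orbit forces $f_{d,c_0}(x) - a_1(c_0) = x^d$ rather than a product of distinct linear factors. Seeing how the old inner product then migrates into a fresh $j = q$ layer of the outer product, thereby acquiring the extra factor of $d$ in its exponent, is what generates the nested structure of \eqref{eq:factorization of f_{d,c}^k}. As an independent check one can verify that both sides have degree $d^k$.
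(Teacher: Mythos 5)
Your proof is correct, and it takes a recognizably different (and leaner) route than the paper's. The paper fixes $r$ and inducts on $q$ in steps of $n$: its base case $q=0$ fully unrolls $f_{d,c_0}^r$ by repeated use of the preimage identity $f_{d,c_0}^i - a_j(c_0) = \bigl(f_{d,c_0}^{i-1} - a_{j-1}(c_0)\bigr)F_{i-1,j-1}^{(d,c_0)}$, and its inductive step composes the entire factorization on the right with $f_{d,c_0}^n$, using the shift relation $F_{a,e}^{(d,c_0)} \circ f_{d,c_0}^b = F_{a+b,e}^{(d,c_0)}$ together with the collapse $f_{d,c_0}^{i} - a_1(c_0) = (f_{d,c_0}^{i-1})^d$ to re-absorb the factors that are produced. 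You instead run a single induction on $k$ in steps of one, composing with just one copy of $f_{d,c_0}$: the $b=1$ case of the shift relation transports every $F$-factor unchanged, so the only factor requiring work at each step is the linear spine $x - a_{n-r}(c_0)$, which you refactor via the algebraic identity $a_{n-r}(c_0) - c_0 = a_{n-r-1}(c_0)^d$ (equivalent to, but more direct than, the paper's preimage-set computation), with the case $r = n-1$ producing the collapse $f_{d,c_0}(x) - a_1(c_0) = x^d$ that converts the old inner block into the new $j=q$ layer of the outer product --- exactly the mechanism that generates the $d^{q}$ exponents. Both proofs thus rest on the same three identities, but your single-step organization eliminates the paper's two-stage structure (separate unrolling of $f^r$ plus an $n$-step induction) and makes the index bookkeeping nearly automatic, at the cost of a case split in every step; the paper's version isolates the period-$n$ composition, which displays more transparently how the exponents stratify by $j$. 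Two minor points: the proposition is stated for $k \geq 1$ while your base case is $k = 0$, which is harmless since the formula degenerates correctly to $f_{d,c_0}^0 = x$ (as $a_n(c_0)=0$); and your appeal to $a_{n-r-1}(c_0) \neq 0$ for $1 \leq n-r-1 \leq n-1$ is justified because the roots of $G_{d,n}$ are Gleason parameters of exact period $n$, which is what makes $(x - a_{n-r-1}(c_0))\,F_{0,n-r-1}^{(d,c_0)}$ the correct factorization of the spine.
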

	\begin{proof}
		Fix \(r\), and proceed by induction on \(q\). First, consider the base case \(q=0\). Then \(k = r\). Since \(0\) is periodic under \(f_{d,c_0}\) with period \(n\), we have
		\[
		f_{d,c_0}(a_{n-1}(c_0)) = f_{d,c_0}(\zeta a_{n-1}(c_0)) = \dots = f_{d,c_0}(\zeta^{d-1} a_{n-1}(c_0)) = 0.
		\]
		Since \(\deg(f_{d,c_0}) = d\), this implies
		\[
		R_{1,0}(f_{d,c_0}) = \{a_{n-1}(c_0), \zeta a_{n-1}(c_0), \dots, \zeta^{d-1} a_{n-1}(c_0)\}.
		\]
		Therefore, we have the factorization
		\begin{align}
			\begin{split}
				\label{eq:f_{d,c}^k}
				f_{d,c_0}^k = f_{d,c_0}^r = f_{d,c_0} \circ f_{d,c_0}^{r-1} 
				&= (f_{d,c_0}^{r-1} - a_{n-1}(c_0)) \prod_{i=1}^{d-1} (f_{d,c_0}^{r-1} - \zeta^i a_{n-1}(c_0)) \\
				&= (f_{d,c_0}^{r-1} - a_{n-1}(c_0)) F_{r-1, n-1}^{(d,c_0)}.
			\end{split}
		\end{align}
		
		Now consider any polynomial of the form \(f_{d,c_0}^i - a_j(c_0)\), where \(i \geq 1\), \(j \geq 2\). We have
		\[
		f_{d,c_0}(a_{j-1}(c_0)) = f_{d,c_0}(\zeta a_{j-1}(c_0)) = \dots = f_{d,c_0}(\zeta^{d-1} a_{j-1}(c_0)) = a_j(c_0),
		\]
		which gives
		\[
		R_{1,a_j(c_0)}(f_{d,c_0}) = \{a_{j-1}(c_0), \zeta a_{j-1}(c_0), \dots, \zeta^{d-1} a_{j-1}(c_0)\}.
		\]
		Therefore,
		\begin{align}
			\begin{split}
				\label{eq:f_{d,c}^k-a_j}
				f_{d,c_0}^i - a_j(c_0) = f_{d,c_0} \circ f_{d,c_0}^{i-1} - a_j(c_0) 
				&= (f_{d,c_0}^{i-1} - a_{j-1}(c_0)) \prod_{\ell=1}^{d-1} (f_{d,c_0}^{i-1} - \zeta^\ell a_{j-1}(c_0)) \\
				&= (f_{d,c_0}^{i-1} - a_{j-1}(c_0)) F_{i-1, j-1}^{(d,c_0)}.
			\end{split}
		\end{align}
		
		Also, if \(j=1\), then
		\begin{equation}
			\label{eq:iterate-j=1}
			f_{d,c_0}^i - a_j(c_0) = (f_{d,c_0}^{i-1})^d.
		\end{equation}
		
		Using Eq.~\eqref{eq:f_{d,c}^k-a_j} for \((i,j) = (r-1,n-2), (r-2,n-2), \dots, (1,n-r+1)\) together with Eq.~\eqref{eq:f_{d,c}^k} gives
		\begin{equation}
			\label{eq:base case}
			f_{d,c_0}^k = f_{d,c_0}^r = (f_{d,c_0}^0 - a_{n-r}(c_0)) \prod_{i=1}^r F_{r-i, n-i}^{(d,c_0)} = (x - a_{n-r}(c_0)) \prod_{i=1}^r F_{r-i, n-i}^{(d,c_0)},
		\end{equation}
		which proves the base case.
		
		Assume the statement holds for \(q = A \geq 0\), i.e.,
		\begin{equation}
			\label{eq:inductive-step}
			f_{d,c_0}^{nA + r} = \left(\prod_{j=0}^{A-1} \prod_{i=1}^{n-1} \left(F_{n(A-j) + r - i, n - i}^{(d,c_0)}\right)^{d^j}\right) \left((x - a_{n-r}(c_0)) \prod_{i=1}^r F_{r-i, n-i}^{(d,c_0)}\right)^{d^A}.
		\end{equation}
		
		To prove the statement for \(q = A + 1\), note that for any \(a,b \geq 0\) and \(1 \leq e \leq n-1\), we have
		\begin{equation}
			\label{eq:compose with f_{d,c}}
			F_{a,e}^{(d,c_0)} \circ f_{d,c_0}^b = \prod_{j=1}^{d-1} (f_{d,c_0}^{a+b} - \zeta^j a_e(c_0)) = F_{a+b, e}^{(d,c_0)}.
		\end{equation}
		
		Now,
		
		\begin{align*}
			f_{d,c_0}^{n(A+1) + r} &= f_{d,c_0}^{nA + r} \circ f_{d,c_0}^n \\
			&= \left(\prod_{j=0}^{A-1} \prod_{i=1}^{n-1} \left(F_{n(A-j) + r - i, n - i}^{(d,c_0)} \circ f_{d,c_0}^n\right)^{d^j}\right)
			\left((f_{d,c_0}^n - a_{n-r}(c_0)) \prod_{i=1}^r F_{r-i, n-i}^{(d,c_0)} \circ f_{d,c_0}^n\right)^{d^A} \\
			&= \left(\prod_{j=0}^{A-1} \prod_{i=1}^{n-1} \left(F_{n(A+1 - j) + r - i, n - i}^{(d,c_0)}\right)^{d^j}\right)
			\left((f_{d,c_0}^n - a_{n-r}(c_0)) \prod_{i=1}^r F_{n + r - i, n - i}^{(d,c_0)}\right)^{d^A} \\
			&= \left(\prod_{j=0}^{A-1} \prod_{i=1}^{n-1} \left(F_{n(A+1 - j) + r - i, n - i}^{(d,c_0)}\right)^{d^j}\right)
			\left((f_{d,c_0}^r)^d \left(\prod_{j=r+1}^{n-1} F_{n + r - j, n - j}^{(d,c_0)}\right) \left(\prod_{i=1}^r F_{n + r - i, n - i}^{(d,c_0)}\right)\right)^{d^A} \\
			&= \left(\prod_{j=0}^{A-1} \prod_{i=1}^{n-1} \left(F_{n(A+1 - j) + r - i, n - i}^{(d,c_0)}\right)^{d^j}\right)
			\left((f_{d,c_0}^r)^d \prod_{i=1}^{n-1} F_{n + r - i, n - i}^{(d,c_0)}\right)^{d^A} \\
			&= \left(\prod_{j=0}^A \prod_{i=1}^{n-1} \left(F_{n(A+1 - j) + r - i, n - i}^{(d,c_0)}\right)^{d^j}\right) (f_{d,c_0}^r)^{d^{A+1}} \\
			&= \left(\prod_{j=0}^A \prod_{i=1}^{n-1} \left(F_{n(A+1 - j) + r - i, n - i}^{(d,c_0)}\right)^{d^j}\right) \left((x - a_{n-r}(c_0)) \prod_{i=1}^r F_{r-i, n-i}^{(d,c_0)}\right)^{d^{A+1}},
		\end{align*}
		which completes the induction. Note that we used Eq.~\eqref{eq:compose with f_{d,c}} in the third equality, Eq.~\eqref{eq:f_{d,c}^k-a_j} (for $(i,j)= (n,n-r), (n-1, n-r-1), \dots, (r+2, 2)$) and Eq.~\eqref{eq:iterate-j=1} (for $(i,j) = (r+1,1)$) in the fourth equality, and Eq.~\eqref{eq:base case} in the last equality.
		
		This completes the proof of Proposition~\ref{prop:factorization-f_{d,c}^k}.
	\end{proof}
	We are ready to prove Theorem~\ref{thm:preperiodic} and Theorem~\ref{thm:periodic}.
	\begin{proof}[Proof of Theorem~\ref{thm:preperiodic}]
		We will show that for any integer \(i \geq 1\), the polynomial \(f_{d,c_0}^{ni} - \alpha\) is Eisenstein at some prime ideal \(\mathfrak{p} \subset \mathcal{O}_K\) lying above the prime \(d\). Since Eisenstein polynomials are irreducible, the irreducibility of $f^k-\alpha$ for all $k\geq 1$ will then follow immediately by Lemma~\ref{lemma:irreducible-infinite-iterate}.
		
		By Lemma~\ref{lem:expansion_preperiodic}, the constant coefficient of \(f_{d,c_0}^{ni} - \alpha\) is of the form \(u a_n(c_0) - \alpha\) for some algebraic unit \(u \in \mathcal{O}_K\). From Theorem~\ref{thm:power-relation}(b) and \cite[Corollary 3.5]{Gok20}, the element \(a_n(c_0)\) is square-free and non-unit in \(\mathcal{O}_K\). Furthermore, Theorem~\ref{thm:power-relation}(b) also implies that the principal ideal \(\la a_n(c_0)\ra\) satisfies
		\[
		\la ua_n(c_0)\ra^A = \la a_n(c_0)\ra^A = \la d\ra
		\]
		for some positive integer \(A\). Therefore, for any prime ideal \(\mathfrak{p} \subset \mathcal{O}_K\) dividing \(d\), we have
		\[
		v_{\mathfrak{p}}(u a_n(c_0)) = 1.
		\]
		By hypothesis, \(\alpha\) satisfies \(v_{\mathfrak{p}}(\alpha) \geq 2\) for some prime ideal \(\mathfrak{p} \mid d\). The non-archimedean property of valuations then implies
		\[
		v_{\mathfrak{p}}(u a_n(c_0) - \alpha) = \min\{ v_{\mathfrak{p}}(u a_n(c_0)), v_{\mathfrak{p}}(\alpha) \} = 1.
		\]
		
		Hence, for such \(\mathfrak{p}\), we can write
		\[
		f_{d,c_0}^{ni} - \alpha = x^{d^{ni}} + d x^d F(x) + \beta
		\]
		where \(F(x) \in \mathcal{O}_K[x]\) and \(\beta \in \mathcal{O}_K\) satisfies \(v_{\mathfrak{p}}(\beta) = 1\). This shows that \(f_{d,c_0}^{ni} - \alpha\) is Eisenstein at \(\mathfrak{p}\). Consequently, each polynomial \(f^k - \alpha\) is irreducible over \(K\) for all \(k \geq 1\), completing the proof.
	\end{proof}
	
	\begin{proof}[Proof of Theorem~\ref{thm:periodic}]
		\textbf{Part (a).} For any integer \(i \geq 1\), by part (b) of Lemma~\ref{lem:expansion_periodic}, we have the expression
		\[
		f_{d,c_0}^{ni} - \alpha = x^{d^{ni}} + d x^d F(x) - \alpha
		\]
		for some \(F(x) \in \mathcal{O}_K[x]\). Since by assumption there exists a prime ideal \(\mathfrak{p} \subset \mathcal{O}_K\) above \(d\) such that
		\[
		v_{\mathfrak{p}}(\alpha) = 1,
		\]
		it follows that \(f_{d,c_0}^{ni} - \alpha\) is Eisenstein at \(\mathfrak{p}\). Hence, \(f_{d,c_0}^{ni} - \alpha\) is irreducible over \(K\) for all \(i \geq 1\). By Lemma~\ref{lemma:irreducible-infinite-iterate}, this irreducibility for infinitely many iterates implies that \(f_{d,c_0}\) is stable over \(K\), which completes the proof of part (a).\par
		
		\textbf{Part (b).} By Proposition~\ref{prop:factorization-f_{d,c}^k}, to complete the proof, it suffices to show that the factors appearing in Eq.~\ref{eq:factorization of f_{d,c}^k} are all distinct and irreducible over \(K\).
		
		First, note that the equality 
		\[
		k - n j - i = k - n j' - i'
		\]
		implies 
		\[
		n(j - j') = i' - i.
		\]
		Since \(1 \leq i, i' \leq n-1\), this can only hold if \(i = i'\) and \(j = j'\). Furthermore, we have \(k - n j - i > r\) for all \(1 \leq i \leq n-1\) and \(0 \leq j \leq q-1\). Therefore, Lemma~\ref{lemma:gcd-F_{k,i}} implies that all factors appearing in Eq.~\ref{eq:factorization of f_{d,c}^k} are relatively prime, and hence distinct.
		
		Next, consider the polynomials
		\[
		G_{i,j,\ell}^{(d,c_0)} = f_{d,c_0}^i - \zeta^{\ell} a_j(c_0) \in K(\zeta)[x]
		\]
		for \(1 \leq j \leq n-1\) and \(1 \leq \ell \leq d-1\). Let \(m\) be the smallest non-negative integer such that
		\[
		m + i \equiv j \pmod{n}.
		\]
		Then
		\[
		G_{i,j,\ell}^{(d,c_0)} \circ f_{d,c_0}^m = f_{d,c_0}^{m+i} - \zeta^{\ell} a_j(c_0).
		\]
		
		By Lemma~\ref{lem:expansion_periodic}, the constant coefficient of \(f_{d,c_0}^{m+i} - \zeta^{\ell} a_j(c_0)\) is
		\[
		a_j(c_0) - \zeta^{\ell} a_j(c_0) = (1 - \zeta^{\ell}) a_j(c_0).
		\]
		From \cite[Lemma 3.1]{Gok19}, \(a_j(c_0)\) is an algebraic unit in \(\mathcal{O}_K\). Consider the unique prime ideal \(\mathfrak{p} := (1 - \zeta^{\ell})\) above \(d\) in \(\mathbb{Q}(\zeta)\) \cite[Lemma 1.4]{Washington96}, and set \(L := K(\zeta)\). Recall from the proof of Lemma~\ref{lem:trivial_cyclo} that \(d\) is unramified in \(K\), hence \(\mathfrak{p}\) remains unramified in \(L\). Let \(\mathfrak{q}\) be a prime ideal in \(\mathcal{O}_L\) lying above \(\mathfrak{p}\). Since \(\mathfrak{p}\) is unramified in \(L\), we have
		\[
		v_{\mathfrak{q}}((1 - \zeta^{\ell}) a_j(c_0)) = 1.
		\]
		Thus, we can write
		\[
		G_{i,j,\ell}^{(d,c_0)} \circ f_{d,c_0}^m = f_{d,c_0}^{m+i} - \zeta^{\ell} a_j(c_0) = x^{d^{m+i}} + d x^d F(x) + \beta
		\]
		for some \(F(x) \in \mathcal{O}_L[x]\) and \(\beta \in \mathcal{O}_L\) with \(v_{\mathfrak{q}}(\beta) = 1\). It follows that \(G_{i,j,\ell}^{(d,c_0)} \circ f_{d,c_0}^m\) is Eisenstein at \(\mathfrak{q}\), hence irreducible over \(L\). Consequently, \(G_{i,j,\ell}^{(d,c_0)} = f_{d,c_0}^i - \zeta^{\ell} a_j(c_0)\) is irreducible over \(L\). Finally, applying Corollary~\ref{cor:capelli} with \(K = \mathbb{Q}(c_0)\), \(L = K(\zeta)\), \(f = f_{d,c_0}^i\), \(u = a_j(c_0) x\), and \(\alpha = \zeta^{\ell}\), we conclude that
		\[
		F_{i,j}^{(d,c_0)} = \prod_{\ell=1}^{d-1} \big(f_{d,c_0}^i - \zeta^{\ell} a_j(c_0)\big)
		\]
		is irreducible over \(K\), completing the proof of Theorem~\ref{thm:periodic}.
	\end{proof}
	\section{An application to arboreal Galois representations}
	\label{section:4}
	The goal of this section is to prove Corollary~\ref{thm:arboreal-periodic} and Corollary~\ref{thm:arboreal-preperiodic}. We begin by introducing some notation and recalling standard facts about arboreal Galois representations.
	
	The automorphism group of the infinite rooted $d$-ary tree, \(\Aut(T_{\infty}^{d})\), can be described as the inverse limit
	\[
	\Aut(T_{\infty}^d) = \varprojlim \Aut(T_n^d),
	\]
	where \(T_n^d\) denotes the rooted $d$-ary tree of height \(n\). The group \(\Aut(T_n^d)\) is isomorphic to the \(n\)-fold wreath product $[C_d]^n$, where \(C_d\) is the cyclic group of order $d$. For a field \(K\), an element \(\alpha \in K\), and a polynomial \(f \in K[x]\) of degree $d\geq 2$, it is well-known that the Galois group \(G_n(f,\alpha)\) embeds into \(\Aut(T_n^d)\) for all \(n \geq 1\).
	
	For a field \(K\), we denote by \(K^{\times 2}\) the subgroup of squares in \(K^\times = K \setminus \{0\}\). Finally, for an algebraic field extension \(L/K\), we write \(\mathrm{Nm}_{L/K}(\beta)\) for the field norm of \(\beta \in L\) over \(K\).
	
	The following lemma gives a useful formula for the discriminant of iterated polynomials, which will be crucial in the proofs of both corollaries.
	\begin{lemma}[\cite{BD24}]
		\label{lemma:discriminant of iterates}
		Let $K$ be a field. Let $f(x)\in K[x]$ be a polynomial of degree $d\geq 2$ with lead coefficient $A\in K^{\times}$, and let $x_0\in K$. Then for every $k\geq 1$, we have
		
		\[\Delta (f^k-x_0) = (-1)^{d^k(d-1)/2}d^{d^k}A^{d^{2k-1}-1}(\Delta(f^{k-1}-x_0))^d\prod_{f'(c)=0}^{} (f^k(c)-x_0),\]
		where the product is over all finite critical points of $f$, repeated according to multiplicity.
	\end{lemma}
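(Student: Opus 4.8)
The plan is to reduce the whole statement to a resultant computation via the standard identity $\Delta(p) = (-1)^{D(D-1)/2}\,\Res(p,p')/\operatorname{lc}(p)$ for a polynomial $p$ of degree $D$ with leading coefficient $\operatorname{lc}(p)$, and then to exploit the composition structure
\[
f^k - x_0 = P\circ f, \qquad P := f^{k-1}-x_0 .
\]
Differentiating gives $(f^k-x_0)' = P'(f)\cdot f'$, so by multiplicativity of the resultant in its second argument,
\[
\Res\!\big(f^k-x_0,\,(f^k-x_0)'\big) = \Res\!\big(P(f),\,P'(f)\big)\cdot \Res\!\big(P(f),\,f'\big).
\]
The two factors will supply, respectively, the term $\Delta(f^{k-1}-x_0)^d$ and the critical-point product $\prod_{f'(c)=0}(f^k(c)-x_0)$, and the powers of $d$ and $A$ will come from tracking leading coefficients.

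For the second factor I would use the root form of the resultant: since $f'$ has degree $d-1$, leading coefficient $dA$, and roots equal to the critical points $c$ of $f$, and since $P(f)=f^k-x_0$ has degree $d^k$,
\[
\Res\!\big(P(f),f'\big) = (-1)^{d^k(d-1)}(dA)^{d^k}\prod_{f'(c)=0}\big(f^k(c)-x_0\big),
\]
which already produces the factor $d^{d^k}$ and the asserted critical product. For the first factor I would first establish the composition identity $\Res\!\big(g(f),h(f)\big) = A^{\,d\deg g\,\deg h}\,\Res(g,h)^d$ for arbitrary $g,h\in K[x]$: writing $g=a\prod_i(x-\gamma_i)$ and noting that each fibre $f^{-1}(\gamma_i)$ consists of $d$ points, the product formula for resultants collapses the factor $\prod_i h(\gamma_i)^d$ to $\Res(g,h)^d$ up to the stated power of $A$. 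Applying this with $g=P$, $h=P'$ (so $\deg g = d^{k-1}$, $\deg h = d^{k-1}-1$) gives $\Res(P(f),P'(f)) = A^{\,d^k(d^{k-1}-1)}\Res(P,P')^d$, and converting $\Res(P,P')$ back to $\Delta(P)=\Delta(f^{k-1}-x_0)$ by the discriminant--resultant identity yields the factor $\Delta(f^{k-1}-x_0)^d$ together with explicit powers of $A$ and an extra sign.

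It then remains to assemble the pieces, dividing by $\operatorname{lc}(f^k-x_0)=A^{(d^k-1)/(d-1)}$ and multiplying by the global sign $(-1)^{d^k(d^k-1)/2}$. A direct calculation shows the powers of $A$ telescope to $A^{\,d^{2k-1}-1}$ and that the powers of $d$ collapse exactly to $d^{d^k}$, matching the claim; I expect these to be routine. The one genuinely delicate point, and the main obstacle, is the sign: one must check that the product of the three contributions $(-1)^{d^k(d^k-1)/2}$, $(-1)^{d^k(d-1)}$, and $(-1)^{d^k(d^{k-1}-1)/2}$ equals the target $(-1)^{d^k(d-1)/2}$. After cancelling the term $d^k(d-1)$ (which is even for every $d$), this reduces to showing
\[
d^k(d+1)\big(d^{k-1}-1\big)\equiv 0 \pmod{4},
\]
which I would verify by cases: for $d$ odd both $d+1$ and $d^{k-1}-1$ are even, and for $d$ even the factor $d^k$ supplies the needed power of $2$ once $k\ge 2$, while the case $k=1$ is trivial since then $d^{k-1}-1=0$. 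The argument is a single, induction-free application of the composition identity, so the difficulty is entirely in the careful bookkeeping of signs and leading coefficients rather than in any conceptual step.
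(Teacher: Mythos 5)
Your proposal is correct, and I checked the bookkeeping: the composition identity $\Res(g(f),h(f))=A^{d\deg g\deg h}\Res(g,h)^d$ holds as you derive it, the $A$-exponents do telescope to $d^{2k-1}-1$ (the key cancellation being $d(d^{k-1}-1)/(d-1)-(d^k-1)/(d-1)=-1$), and the sign condition reduces exactly to $d^k(d+1)(d^{k-1}-1)\equiv 0\pmod 4$, which your case analysis settles. Note that the paper itself gives no proof of this lemma --- it is quoted from \cite{BD24} --- and your argument via $\Res(f^k-x_0,(f^k-x_0)')=\Res(P(f),P'(f))\cdot\Res(P(f),f')$ with $P=f^{k-1}-x_0$ is the standard route taken for such iterate-discriminant formulas, so there is nothing genuinely divergent to compare. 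One small point worth making explicit: your step $\operatorname{lc}(f')=dA$ (and the count of $d-1$ critical points with multiplicity) silently assumes $\operatorname{char}K\nmid d$; this is an honest hypothesis the lemma needs anyway, since the stated formula can fail when $\operatorname{char}K\mid d$ (e.g.\ $f=x^2+x$ in characteristic $2$ has $\Delta(f-x_0)=1$ while the right side vanishes), and it is harmless for the paper, which applies the lemma only over number fields.
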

	Next, we prove a simple fact from group theory that will play an important role in the proofs throughout this section. We denote by $S_n$ the symmetric group of degree $n$, and by $A_n$ the alternating group of degree $n$.
	
	\begin{lemma}
		\label{lemma:odd-alternating}
		Let $n>1$ be an odd positive integer, and let $G$ be an abelian transitive subgroup of $S_n$. Then $G$ must be contained in $A_n$.
	\end{lemma}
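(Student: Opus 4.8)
The plan is to show that a transitive abelian subgroup automatically acts \emph{regularly}, and then to finish with a parity argument using the oddness of $n$. First I would establish that the action of $G$ on $\{1,\dots,n\}$ is simply transitive. Since $G$ is transitive, any two point stabilizers $\Stab_G(i)$ and $\Stab_G(j)$ are conjugate in $G$; but $G$ is abelian, so conjugation is trivial, forcing $\Stab_G(i) = \Stab_G(j)$ for all $i,j$. Any element lying in this common stabilizer then fixes every point of $\{1,\dots,n\}$, and hence is the identity because $G \leq S_n$ acts faithfully. Thus every stabilizer is trivial, and the orbit--stabilizer theorem gives $|G| = n$.

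With $|G| = n$ in hand, the remainder is immediate. Because $n$ is odd, Lagrange's theorem forces the order of every element of $G$ to be odd. Now $\sgn : S_n \to \{\pm 1\}$ is a homomorphism onto a group of order $2$, so it must send any element of odd order to $+1$ (equivalently, a permutation of odd order factors into cycles whose lengths are odd, and odd-length cycles are even permutations). Either way $\sgn(g) = 1$ for every $g \in G$, which is precisely the assertion $G \subseteq A_n$.

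I expect the regularity step to be the only point carrying real content: once $|G| = n$ is known, the conclusion follows formally from the fact that odd-order permutations are even. The crux is that commutativity collapses the conjugacy of stabilizers into a single common stabilizer, which faithfulness then forces to be trivial. It is worth emphasizing where each hypothesis enters: transitivity supplies $n \mid |G|$ through orbit--stabilizer and, combined with faithfulness, pins down $|G| = n$ exactly; the abelian hypothesis is what makes all stabilizers coincide; and the oddness of $n$ is invoked only at the very end, to eliminate even-order (and hence potentially odd-parity) elements.
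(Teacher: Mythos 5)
Your proof is correct and takes essentially the same route as the paper: first pin down $|G| = n$, then rule out odd permutations by a parity argument with $\sgn$ --- the paper phrases this step via surjectivity of $\sgn|_G$ and the first isomorphism theorem (forcing $2 \mid n$), while you argue element-wise from odd element orders, which is the same argument in different clothing. The only substantive difference is that the paper cites $|G| = n$ from \cite[Lemma 2]{AndrewsPetsche20}, whereas you prove it directly (abelian plus transitive forces all point stabilizers to coincide, hence to be trivial by faithfulness, so the action is regular); your self-contained derivation of this step is correct and makes the lemma independent of that reference.
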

	
	\begin{proof}
		By \cite[Lemma 2]{AndrewsPetsche20}, we have $|G| = n$. Consider the sign homomorphism $\sgn : S_n \rightarrow \{-1, 1\}$. Suppose, for the sake of contradiction, that $G$ is not contained in $A_n$. This implies that the restriction homomorphism $\sgn|_G : G \rightarrow \{-1, 1\}$ is surjective. But then, by the first isomorphism theorem, we have $2 \mid n$, which contradicts the hypothesis. Hence, $G$ must be contained in $A_n$, as desired.
	\end{proof}
	
	\begin{remark}
		\label{remark:square-disc}
		Let $K$ be a field, and let $f \in K[x]$ be a polynomial of odd degree $d\geq 2$. If $f$ is irreducible over $K$ and the Galois group $G$ of $f$ over $K$ is abelian, then it follows from Lemma~\ref{lemma:odd-alternating} that $G$ must be contained in $A_d$. By a well-known fact from Galois theory, this implies that $\Delta(f) \in K^{\times 2}$. We will use this fact several times in the proofs of this section.
	\end{remark}
	
	We are finally ready to prove Corollary~\ref{thm:arboreal-periodic} and Corollary~\ref{thm:arboreal-preperiodic}.
	
	\begin{proof}[Proof of Corollary~\ref{thm:arboreal-periodic}]
		For the sake of contradiction, suppose that $G_{\infty}(f_{d,c_0}, \alpha)$ is abelian. We will prove each bullet point in the statement separately.
		
		\medskip
		\noindent \textbf{Case 1.} Suppose that $d = 2$, $n\geq 3$ and $v_{\mathfrak{p}}(\alpha) = 1$ for some prime $\mathfrak{p} \subset \mathcal{O}_K$ lying above $d$.
		
		\medskip
		Let $\beta$ be a root of $f_{2,c_0}^{n-2} - \alpha$, and let $G$ be the Galois group of $f_{2,c_0}^2 - \beta$ over $K(\beta)$. Since
		\[
		(f_{2,c_0}^{n-2} - \alpha) \circ f_{2,c_0}^2 = f_{2,c_0}^{n-2} \circ f_{2,c_0}^2 - \alpha = f_{2,c_0}^n - \alpha
		\]
		is irreducible over $K$ by Theorem~\ref{thm:periodic}, Lemma~\ref{lemma:Capelli} implies that $f_{2,c_0}^2 - \beta$ is irreducible over $K(\beta)$. Since $G$ is isomorphic to a subquotient of $G_n(f_{2,c_0}, \alpha)$, which is abelian, we conclude that $G$ is also abelian. Note that
		\[
		f_{2,c_0}^2 - \beta = x^4 + 2 f_{2,c_0}(0) x^2 + f_{2,c_0}^2(0) - \beta.
		\]
		By direct computation using \cite[Corollary 4.5]{Conrad}, it follows that either
		\begin{equation}
			\label{eq:disc-belongsK2}
			f_{2,c_0}^2(0) - \beta \in K(\beta)^{\times 2}
		\end{equation}
		or
		\begin{equation}
			\label{eq:belongsK2}
			4 \beta \big(f_{2,c_0}^2(0) - \beta\big) \in K(\beta)^{\times 2}
		\end{equation}
		must hold. By the multiplicativity of the norm, we have
		\[
		\mathrm{Nm}_{K(\beta)/K} \big( \beta \big(f_{2,c_0}^2(0) - \beta\big) \big) = \mathrm{Nm}_{K(\beta)/K}(\beta) \cdot \mathrm{Nm}_{K(\beta)/K}(f_{2,c_0}^2(0) - \beta).
		\]
		Since $\beta$ is a root of $f_{2,c_0}^{n-2} - \alpha$, and using the irreducibility of $f_{d,c_0}^{n-2}-\alpha$ over $K$,
		\[
		\mathrm{Nm}_{K(\beta)/K}(\beta) = f_{2,c_0}^{n-2}(0) - \alpha, \quad \mathrm{Nm}_{K(\beta)/K}(f_{2,c_0}^2(0) - \beta) = f_{2,c_0}^n(0) - \alpha.
		\]
		
		Using Eq.~\ref{eq:disc-belongsK2} and Eq.~\ref{eq:belongsK2}, it follows that either
		\[
		f_{2,c_0}^n(0) - \alpha \in K^{\times 2}
		\]
		or
		\[
		\big(f_{2,c_0}^{n-2}(0) - \alpha\big)\big(f_{2,c_0}^n(0) - \alpha\big) \in K^{\times 2}
		\]
		must hold. Since $f_{2,c_0}^n(0) = 0$ by periodicity, and $f_{2,c_0}^{n-2}(0)$ is an algebraic unit by \cite[Lemma 3.1]{Gok19} (recall the assumption $n\geq 3$), and given that $v_{\mathfrak{p}}(\alpha) = 1$, we have
		\[
		v_{\mathfrak{p}}(f_{2,c_0}^n(0) - \alpha) = v_{\mathfrak{p}}(\alpha) = 1
		\]
		and
		\[
		v_{\mathfrak{p}} \Big( \big(f_{2,c_0}^{n-2}(0) - \alpha\big)\big(f_{2,c_0}^n(0) - \alpha\big) \Big) = v_{\mathfrak{p}}(f_{2,c_0}^{n-2}(0) - \alpha) + v_{\mathfrak{p}}(f_{2,c_0}^n(0) - \alpha) = v_{\mathfrak{p}}(\alpha) = 1.
		\]
		
		Thus, neither of these two elements can be a square in $K$, which is a contradiction. Hence, $G_{\infty}(f_{2,c_0}, \alpha)$ cannot be abelian, completing the proof of this case.
		
		
		\medskip

		\textbf{Case 2.} Suppose  that $d=2$, $n\geq 3$ and $\alpha=0$.\\
		
		Note that 
		\[
		f_{2,c_0}^{n-2}(-a_2(c_0)) = f_{2,c_0}^{n-2}(a_2(c_0)) = f_{2,c_0}^n(0) = 0.
		\]
		Thus, $-a_2(c_0)$ lies in the preimage set $f_{2,c_0}^{-(n-2)}(0)$. Moreover, by the proof of Theorem~\ref{thm:periodic}(b), the polynomial
		\[
		f_{2,c_0}^i - (-a_2(c_0)) = f_{2,c_0}^i + a_2(c_0)
		\]
		is irreducible over $K$ for any $i \geq 1$. Let $G$ be the Galois group of $f_{2,c_0}^2 + a_2(c_0)$ over $K$. Since $G_{\infty}(f_{2,c_0}, 0)$ is assumed abelian, $G$ must also be abelian. We have
		\[
		f_{2,c_0}^2 + a_2(c_0) = x^4 + 2 a_1(c_0) x^2 + 2 a_2(c_0).
		\]
		Since $a_2(c_0)$ is an algebraic unit by \cite[Lemma 3.1]{Gok19} (and by recalling $n\geq 3$), and $2$ is unramified in $K$ (by the proof of Lemma~\ref{lem:trivial_cyclo}), it follows that 
		\[
		2 a_2(c_0) \notin K^{\times 2}.
		\]
		Therefore, by \cite[Corollary 4.5]{Conrad}, the Galois group of $f_{2,c_0}^2 + a_2(c_0)$ over $K$ must be isomorphic to $\ZZ/4\ZZ$. By the same corollary, we conclude
		\[
		\bigl( (2 a_1(c_0))^2 - 8 a_2(c_0) \bigr)(2 a_2(c_0)) = 4(a_1(c_0)^2 - 2 a_2(c_0)) \cdot 2 a_2(c_0) \in K^{\times 2}.
		\]
		Note that $a_1(c_0)$ is an algebraic unit by \cite[Lemma 3.1]{Gok19}, so
		\[
		v_{\mathfrak{p}}(a_1(c_0)^2 - 2 a_2(c_0)) = 0.
		\]
		Since $2$ is unramified in $K$ and $a_2(c_0)$ is an algebraic unit, we also have
		\[
		v_{\mathfrak{p}}(4) = 2, \quad v_{\mathfrak{p}}(2 a_2(c_0)) = 1.
		\]
		Thus,
		\[
		v_{\mathfrak{p}} \bigl( 4(a_1(c_0)^2 - 2 a_2(c_0)) \cdot 2 a_2(c_0) \bigr) = 2 + 0 + 1 = 3.
		\]
		Therefore,
		\[
		4(a_1(c_0)^2 - 2 a_2(c_0)) \cdot 2 a_2(c_0) \notin K^{\times 2},
		\]
		a contradiction. Hence, $G_{\infty}(f_{2,c_0}, 0)$ is non-abelian, as desired.
		
		\medskip
		\textbf{Case 3.} Suppose that $d>2$ and $v_{\mathfrak{p}}(\alpha)=1$ for some prime $\mathfrak{p}\subset \mathcal{O}_K$ lying above $d$. 
		
		\medskip
		Note that $f_{d,c_0}^i - \alpha$ is irreducible for any $i \geq 1$ by Theorem~\ref{thm:periodic}. Since $G_i(f_{d,c_0},\alpha)$ is also abelian for any $i \geq 1$ by assumption, Remark~\ref{remark:square-disc} yields
		\[
		\Delta(f_{d,c_0}^i - \alpha) \in K^{\times 2}
		\]
		for any $i \geq 1$. Now, take any $j >1$ such that $n \nmid j$. Using Lemma~\ref{lemma:discriminant of iterates} with $k = j - 1$ and $k = j$, we conclude
		\begin{equation}
			\label{eq:square-ratio-disc}
			\frac{\Delta(f_{d,c_0}^j - \alpha)}{(\Delta(f_{d,c_0}^{j-1} - \alpha))^d} = \pm d^{d^j}(f_{d,c_0}^j(0) - \alpha) \in K^{\times 2}.
		\end{equation}
		Recall from the proof of Lemma~\ref{lem:trivial_cyclo} that $d$ is unramified in $K$, i.e., $v_{\mathfrak{p}}(d) = 1$. Moreover, since $f_{d,c_0}^j(0)$ is an algebraic unit by \cite[Lemma 3.1]{Gok19} and $v_{\mathfrak{p}}(\alpha) = 1$ by assumption, we have $v_{\mathfrak{p}}(f_{d,c_0}^j(0) - \alpha) = 0$. Hence,
		\[
		v_{\mathfrak{p}}(d^{d^j}(f_{d,c_0}^j(0) - \alpha)) = d^j v_{\mathfrak{p}}(d) + v_{\mathfrak{p}}(f_{d,c_0}^j(0) - \alpha) = d^j,
		\]
		which is odd. Thus, $\pm d^{d^j}(f_{d,c_0}^j(0) - \alpha) \notin K^{\times 2}$, contradicting Eq.~\ref{eq:square-ratio-disc}. Hence, $G_{\infty}(f_{d,c_0},\alpha)$ cannot be abelian, completing the proof of this case.
		
		\medskip
		\textbf{Case 4.} Suppose that $d>2$, $n\geq 2$ and $\alpha = 0$. 
		
		\medskip
		Let $\zeta \ne 1$ be a $d$-th root of unity. Note that
		\[
		f_{d,c_0}(\zeta a_{n-1}(c_0)) = f_{d,c_0}(a_{n-1}(c_0)) = 0.
		\]
		Therefore, $\zeta a_{n-1}(c_0)$ lies in the preimage set $f_{d,c_0}^{-1}(0)$. Furthermore, by the proof of Theorem~\ref{thm:periodic}, $f_{d,c_0}^i - \zeta a_{n-1}(c_0)$ is irreducible over $L := K(\zeta)$ for any $i \geq 1$. Let $G_i$ be the Galois group of $f_{d,c_0}^i - \zeta a_{n-1}(c_0)$ over $L$. Since $G_i$ must be abelian by assumption, Remark~\ref{remark:square-disc} yields
		\begin{equation}
			\label{eq:square-disc-alpha=0 case}
			\Delta(f_{d,c_0}^i - \zeta a_{n-1}(c_0)) \in L^{\times 2}
		\end{equation}
		for any $i \geq 1$. Now, applying Lemma~\ref{lemma:discriminant of iterates} for $f = f_{d,c_0}$, $k = 2n - 1$, and $x_0 = \zeta a_{n-1}(c_0)$, we obtain
		\[
		\Delta(f_{d,c_0}^{2n-1} - \zeta a_{n-1}(c_0)) = (-1)^{d^{2n-1}(d-1)/2} d^{d^{2n-1}} (\Delta(f_{d,c_0}^{2n-2} - \zeta a_{n-1}(c_0)))^d (f_{d,c_0}^{2n-1}(0) - \zeta a_{n-1}(c_0)).
		\]
		Recalling $f_{d,c_0}^{2n-1}(0) = a_{2n-1}(c_0) = a_{n-1}(c_0)$ and applying Eq.~\ref{eq:square-disc-alpha=0 case} for $i = 2n - 2$ and $i = 2n - 1$, we deduce
		\[
		\pm d^{d^{2n-1}}(1 - \zeta) a_{n-1}(c_0) \in L^{\times 2}.
		\]
		Since $\langle d \rangle = \langle 1 - \zeta \rangle^{d - 1}$ as principal ideals in $\QQ(\zeta)$ and $d$ is odd, we have $d \in L^{\times 2}$, which forces
		\[
		\pm (1 - \zeta) a_{n-1}(c_0) \in L^{\times 2}.
		\]
		However, since $d$ is unramified in $K$, the prime ideal $\langle 1 - \zeta \rangle$ of $\QQ(\zeta)$ is unramified in the extension $L / \QQ(\zeta)$. As $a_{n-1}(c_0)$ is an algebraic unit by \cite[Lemma 3.1]{Gok19} (since $n\geq 2$ by assumption), we conclude
		\[
		\pm (1 - \zeta) a_{n-1}(c_0) \notin L^{\times 2},
		\]
		which is the desired contradiction. Hence, $G_{\infty}(f_{d,c_0},0)$ is non-abelian, completing the proof of Corollary~\ref{thm:arboreal-periodic}.
	\end{proof}
	\begin{proof}[Proof of Corollary~\ref{thm:arboreal-preperiodic}]
		
		Suppose, for the sake of contradiction, that $G_{\infty}(f_{d,c_0},\alpha)$ is abelian. We will consider the cases $d=2$ and $d>2$ separately.
		
		\medskip
		\textbf{Case 1.} Suppose $d=2$ and $n\geq 3$. 
		
		\medskip
		Let $\beta$ be a root of $f_{2,c_0}^{n-2} - \alpha$. Similarly to the proof of Corollary~\ref{thm:arboreal-periodic}, since $f_{2,c_0}^n-\alpha$ is irreducible over $K$ by Theorem~\ref{thm:preperiodic}, Lemma~\ref{cor:capelli} implies that $f_{2,c_0}^2-\beta$ is irreducible over $K(\beta)$. Let $G$ be the Galois group of $f_{2,c_0}^2-\beta$ over $K(\beta)$. Since $G_{\infty}(f_{2,c_0},\alpha)$ is abelian by assumption, $G$ must also be abelian. We have
		\[
		f_{2,c_0}^2 - \beta = x^4 + 2 f_{2,c_0}(0) x^2 + f_{2,c_0}^2(0) - \beta.
		\]
		By direct computation using \cite[Corollary 4.5]{Conrad}, either
		\begin{equation}
			\label{eq:norm-square-disc}
			f_{2,c_0}^2(0)-\beta \in K(\beta)^{\times 2}
		\end{equation}
		or
		\begin{equation}
			\label{eq:norm-square}
			4 \beta \bigl(f_{2,c_0}^2(0) - \beta \bigr) \in K(\beta)^{\times 2}
		\end{equation}
		must hold. Note that
		\[
		\mathrm{Nm}_{K(\beta)/K}(f_{2,c_0}^2(0) - \beta) = f_{2,c_0}^n(0) - \alpha
		\]
		and
		\begin{align*}
			\mathrm{Nm}_{K(\beta)/K}\bigl(\beta(f_{2,c_0}^2(0) - \beta)\bigr) &= \mathrm{Nm}_{K(\beta)/K}(\beta) \cdot \mathrm{Nm}_{K(\beta)/K}(f_{2,c_0}^2(0) - \beta) \\
			&= (f_{2,c_0}^{n-2}(0) - \alpha)(f_{2,c_0}^n(0) - \alpha).
		\end{align*}
		Using Eq.~\ref{eq:norm-square-disc} and Eq.~\ref{eq:norm-square}, it follows that either
		\[
		f_{2,c_0}^n(0) - \alpha \in K^{\times 2}
		\]
		or
		\[
		(f_{2,c_0}^{n-2}(0) - \alpha)(f_{2,c_0}^n(0) - \alpha) \in K^{\times 2}
		\]
		must hold. By Theorem~\ref{thm:power-relation}(b), we have 
		\[
		v_{\mathfrak{p}}(f_{2,c_0}^n(0)) = v_{\mathfrak{p}}(a_n(c_0)) > 0,
		\]
		and by \cite[Corollary 3.5]{Gok20}, $a_n(c_0)$ is square-free in $\mathcal{O}_K$. Thus,
		\[
		v_{\mathfrak{p}}(f_{2,c_0}^n(0)) = 1.
		\]
		Since $v_{\mathfrak{p}}(\alpha) \geq 2$, the non-archimedean property of valuations implies
		\[
		v_{\mathfrak{p}}(f_{2,c_0}^n(0) - \alpha) = 1.
		\]
		This shows that
		\[
		f_{2,c_0}^n(0) - \alpha \notin K^{\times 2},
		\]
		which forces
		\[
		(f_{2,c_0}^{n-2}(0) - \alpha)(f_{2,c_0}^n(0) - \alpha) \in K^{\times 2}.
		\]
		On the other hand, by Theorem~\ref{thm:power-relation}(a), $f_{2,c_0}^{n-2}(0)$ is an algebraic unit (recall $n \geq 3$). Since $v_{\mathfrak{p}}(\alpha) = 2 > 0$, it follows that
		\[
		v_{\mathfrak{p}}(f_{2,c_0}^{n-2}(0) - \alpha) = 0.
		\]
		Hence,
		\[
		v_{\mathfrak{p}}\bigl( (f_{2,c_0}^{n-2}(0) - \alpha)(f_{2,c_0}^n(0) - \alpha) \bigr) = 1,
		\]
		implying
		\[
		(f_{2,c_0}^{n-2}(0) - \alpha)(f_{2,c_0}^n(0) - \alpha) \notin K^{\times 2},
		\]
		which is a contradiction.
		
		Therefore, $G_{\infty}(f_{2,c_0}, \alpha)$ cannot be abelian, completing the proof of the first case.\\
		
		\textbf{Case 2.} Suppose $d > 2$. 
		
		\medskip
		By Lemma~\ref{lemma:discriminant of iterates}, we have
		\begin{equation}
			\label{eq:discformula-preperiodic-case2}
			\Delta (f_{d,c_0}^{3n} - \alpha) = (-1)^{d^{3n}(d-1)/2} d^{d^{3n}} \bigl(\Delta(f_{d,c_0}^{3n-1} - \alpha)\bigr)^d (f_{d,c_0}^{3n}(0) - \alpha).
		\end{equation}
		Note that, using Lemma~\ref{lemma:discriminant of iterates} for $k = 1, 2, \dots, 3n-1$, since $d$ is odd, we have
		\begin{equation}
			\label{eq:discfor3n-1}
			\Delta(f_{d,c_0}^{3n-1} - \alpha) = \pm d^B C^2\prod_{i=1}^{3n-1} (f_{d,c_0}^i(0) - \alpha)
		\end{equation}
		for some positive integer $B$ and $C\in K^{\times}$. By Theorem~\ref{thm:power-relation}(b), since $d$ is odd, $v_{\mathfrak{p}}(d)$ is even. On the other hand, by Theorem~\ref{thm:power-relation}(a), $f_{d,c_0}^i(0)$ is an algebraic unit for any $i\in S:=\{1,2,\dots, 3n-1\}\setminus \{n,2n\}$. Since $v_{\mathfrak{p}}(\alpha) > 0$ by assumption, we obtain
		\begin{equation}
			\label{eq:valuation0-lastproof}
			v_{\mathfrak{p}}(f_{d,c_0}^i(0) - \alpha) = 0
		\end{equation}
		for all $i\in S$. Now, by Theorem~\ref{thm:power-relation}(b) and \cite[Corollary 3.5]{Gok20}, we have
		\[
		v_{\mathfrak{p}}(f_{d,c_0}^{jn}(0)) = v_{\mathfrak{p}}(a_{jn}(c_0)) = 1
		\]
		for any $j\geq 1$. Since $v_{\mathfrak{p}}(\alpha) = 2$ by hypothesis, we conclude that
		\begin{equation}
			\label{eq:valuation1 for n and 2n}
			v_{\mathfrak{p}}(f_{d,c_0}^{jn}(0) - \alpha) =  1
		\end{equation}
		for any $j\geq 1$. Using this for $j=1,2$ together with Eq.~\ref{eq:discfor3n-1} and Eq.~\ref{eq:valuation0-lastproof}, and recalling that $v_{\mathfrak{p}}(d)$ is even, we conclude that
		\[v_{\mathfrak{p}} \bigl(\Delta(f_{d,c_0}^{3n-1} - \alpha)\bigr) \text{ is even.}\]
		Using this together with Eq.~\ref{eq:discformula-preperiodic-case2} and Eq.~\ref{eq:valuation1 for n and 2n}, and again recalling that $v_{\mathfrak{p}}(d)$ is even, we conclude that
		\[v_{\mathfrak{p}} \bigl(\Delta(f_{d,c_0}^{3n} - \alpha)\bigr) \text{ is odd},\]
		which forces
		\begin{equation}
			\label{eq:nonsquare-disc-preperiodic-case2}
			\Delta(f_{d,c_0}^{3n} - \alpha) \notin K^{\times 2}.
		\end{equation}
		Finally, recall that $f_{d,c_0}^{3n} - \alpha$ is irreducible over $K$ by Theorem~\ref{thm:preperiodic}. Since $G_{3n}(f_{d,c_0}, \alpha)$ is also abelian by assumption, Eq.~\ref{eq:nonsquare-disc-preperiodic-case2} contradicts Remark~\ref{remark:square-disc}. Thus, $G_{\infty}(f_{d,c_0}, \alpha)$ cannot be abelian, finishing the proof of Corollary~\ref{thm:arboreal-preperiodic}.

	\end{proof}

\end{document}